\theoremstyle{plain}
\newtheorem{theorem}{Theorem}[section]
\newtheorem{lemma}[theorem]{Lemma}
\newtheorem{question}{Question}
\theoremstyle{definition}
\newtheorem{definition}[theorem]{Definition}
\newtheorem{remark}[theorem]{Remark}
\def\de{\delta}    \def\Sg{\Sigma} \def\sg{\sigma}
\def\su{\subset}     \def\Lm{\Lambda}
\def\al{\alpha}       \def\ga{\gamma} \def\Ga{\Gamma}
 \def\ra{\rightarrow}\def\om{\omega} \def\OM{\Omega}
\def\ti{\tilde}     \def\z{\times}  \def\x{\cdot}
   \def\ep{\varepsilon}    
\newcommand{\R}{\mathbb{R}}
\newcommand{\Mm}{\mathcal{M}}
\newcommand{\Bb}{\mathcal{B}}
\newcommand{\Nn}{\mathcal{N}}
\newcommand{\Ll}{\mathcal{L}}
\newcommand{\Tt}{\mathcal{T}}
\newcommand{\Oo}{\mathcal{O}}
\newcommand{\Vv}{\mathcal{V}}
\newcommand{\Ee}{\mathcal{E}}
\newcommand{\mb}{\mathbf}
\DeclareMathOperator{\diam}{diam}
\DeclareMathOperator{\length}{length}
\DeclareMathOperator{\vol}{vol}
\begin{document}

\title{Length of closed geodesics on Riemannian manifolds with good covers}
\author{Zhifei Zhu}
\maketitle
\begin{abstract}
In this article, we prove a generalization of our previous result in \cite{r_zzf}. In particular, we 
show that for an $n$-dimensional, simply-connected Riemannian manifold with diameter $D$ and volume $V$. Suppose that $M$ admits a good cover consisting of $N$ elements. Then the length of a shortest closed geodesic on $M$ is bounded by some function which only depends on $V, D$ and $N$.
\end{abstract}


\section{Introduction}

In this paper, we extend the results established in \cite{r_zzf}. Specifically, we prove the following:

\begin{theorem}\label{thm_main}
Let $M$ be a complete, $n$-dimensional, simply-connected Riemannian manifold with diameter $D$ and volume $V$. Suppose that $M$ admits a good cover consisting of $N$ elements. Then the length of the shortest closed geodesic on $M$ satisfies:
\[
\length \leq A(n, V, D, N),
\]
where $A$ is a function that only depends on $n$, $V$, $D$ and $N$.
\end{theorem}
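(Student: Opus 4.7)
The plan is to convert the good cover $\{U_1,\ldots,U_N\}$ into a combinatorial-metric model of $M$ and then run a Birkhoff-type sweepout argument in the spirit of Nabutovsky--Rotman. The strategy splits into four steps.

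\textbf{Step 1 (metric realization of the nerve).} By the nerve theorem, $M$ is homotopy equivalent to the nerve $\Nn$, a simply-connected simplicial complex on at most $N$ vertices and of dimension $\leq n$. Pick a point $p_i$ in each $U_i$; for every pair with $U_i \cap U_j \neq \emptyset$, join $p_i$ to $p_j$ by a minimizing geodesic in $M$ of length $\leq D$. This produces a 1-complex $K^{(1)} \subset M$ with at most $\binom{N}{2}$ edges, each of length $\leq D$.

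\textbf{Step 2 (short fillings of 2-simplices).} For each triple $(i,j,k)$ with $U_i \cap U_j \cap U_k \neq \emptyset$, the three associated geodesic edges bound a loop $\gamma_{ijk}$ of length $\leq 3D$, contained in a contractible subset of $M$. The central task is to upgrade this topological contractibility to a quantitative null-homotopy through curves of bounded length. Combining the diameter bound $D$, the volume bound $V$, and the combinatorial count $N$ (and adapting the filling construction from \cite{r_zzf}) should yield, for each admissible triangle, a disk-filling $h_{ijk} : D^2 \to M$ whose slice curves have length at most some $L_1 = L_1(n,V,D,N)$.

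\textbf{Step 3 (bounded-length sweepout).} Because $\Nn$ is simply connected with only $N$ vertices, any combinatorial loop in $\Nn$ admits a null-homotopy of combinatorial area bounded by a function $f(N)$. Transferring this contraction to $M$ via Steps 1--2 produces, for every loop in the 1-skeleton $K^{(1)}$, a null-homotopy assembled from at most $f(N)$ of the disks $h_{ijk}$. Extending this to a sweepout representing a non-trivial class (guaranteed to exist since $\pi_k(M) \neq 0$ for some $k \geq 2$ by Hurewicz, as $M$ is simply-connected and non-contractible) gives a family of loops in $M$ of length $\leq L_2(n,V,D,N)$.

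\textbf{Step 4 (min-max for geodesics).} Apply Birkhoff curve-shortening along the sweepout. Non-triviality of the homotopy class prevents the family from collapsing to points, so the min-max critical value yields a closed geodesic whose length is at most the maximal loop length in the sweepout, namely $A(n,V,D,N)$.

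\textbf{Main obstacle.} The crux is Step 2: extracting a geometrically controlled filling of each combinatorial 2-cell from the purely topological contractibility of triple intersections, with length bounds depending only on $n$, $V$, $D$, and $N$. This is where the volume $V$ enters essentially, and requires an isoperimetric-style estimate generalizing the filling argument of \cite{r_zzf} to the good-cover setting without any curvature hypothesis.
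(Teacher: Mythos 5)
Your proposal departs significantly from the paper's argument and contains gaps that would prevent it from closing. In Step~1, a minimizing geodesic of length $\leq D$ joining $p_i$ to $p_j$ need not stay inside $U_i\cup U_j$, so the triangle $\gamma_{ijk}$ need not lie in any contractible piece of the cover; the paper instead builds the nerve graph $\Sigma$ (Definition~\ref{def_graph}) with edges lying \emph{inside} $A_i\cup A_j$ of length $\leq 2F_A(V,D)$. In Step~2, the estimate you flag as the ``main obstacle'' is already an axiom: Definition~\ref{def_gc} requires that every closed curve in a $B_j$ contracts inside $B_j$ with width $\leq G(V,D)$, and that any intersecting pair $A_i\cup A_j$ is contained in some $B_k$. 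You are attempting to re-derive part of the hypothesis, which indicates the good-cover structure is being under-used.

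More fundamentally, the sweepout/min-max architecture of Steps~3--4 works directly only when the first nontrivial homotopy of $M$ sits in degree~2 (sweeping by circles). If $\pi_2(M)=0$ and the first nontrivial $\pi_m(M)$ occurs for $m\geq 3$, you need a controlled nontrivial map $S^{m-1}\to\Omega M$, which a 2-cell filling of the nerve does not produce. The paper avoids this by bounding the \emph{width} of contractions of loops of length $\leq 3D$ (equivalently the depth $S_p(M,3D)$) and then invoking the Nabutovsky--Rotman theorems (Theorems~\ref{thmD} and~\ref{thm4_q}) to convert a width bound into a length bound valid in all degrees. Moreover, the width bound itself is not obtained from a one-shot combinatorial filling of the nerve but from a tree-pruning argument (step~(5) of the proof of Theorem~\ref{thmW}): assuming there is no geodesic of length $\leq 3D$, Birkhoff curve shortening contracts $\gamma$; one records simplicial approximations of the intermediate curves in $\Sigma$, and since there are at most $(N^2+1)^X$ simplicial loops of bounded simplicial length, any contraction tree of greater height has repeated approximations and can be shortened via Lemma~\ref{lm_rep}. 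That dynamic, bounded-height control on a Birkhoff-generated family is the key idea absent from your proposal.
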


The concept of a good cover used in Theorem~\ref{thm_main} is defined as follows:

\begin{definition}\label{def_gc}
Let $M$ be a closed, $n$-dimensional, simply-connected Riemannian manifold with diameter $D$ and volume $V$. An open cover $\Oo = \{B_i\}_{i \in I}$ of $M$ is said to be a good cover if there exists a refinement $\ti{\Oo} = \{A_j\}_{j \in J}$ that satisfies the following conditions:
\begin{itemize}
  \item Any two points in $A_i$ can be joined by a curve of length $\leq F_A(V, D)$ within $A_i$.
  \item Any two points in $B_j$ can be joined by a curve of length $\leq F_B(V, D)$ within $B_j$.
  \item If the intersection $A_i \cap A_j \neq \emptyset$, then the union $A_i \cup A_j$ is contained in some $B_k \in \Oo$.
  \item For any closed curve $\gamma: S^1 \to B_j \in \Oo$, there is a contraction $H: S^1 \times [0,1] \to B_j$ of $\gamma$, i.e., $H(t,0) = \gamma(t)$ and $H(t,1) = \text{point} \in B_j$, such that
  \[
  w_H \leq G(V, D).
  \]
  \item The number of elements in $\ti{\Oo}$ and $\Oo$ are both bounded by some $N$.
\end{itemize}
\end{definition}

\begin{remark}\label{rm_fup}
Sometimes, we take $F(V, D) = \max\{F_A(V, D), F_B(V, D)\}$ as a universal upper bound for an arc connecting points within these open sets. If, in particular, $N=N(V,D)$, then, obviously, the function $A=A(V,D)$ in Theorem~\ref{thm_main}.
\end{remark}

In Section~\ref{sec21}, we provide some examples of manifolds that admit a good cover, especially the cases of four-dimensional non-collapsed manifolds with bounded diameter and Ricci curvature considered in \cite{r_zzf}. In particular, in Definition~\ref{def_gc}, we do not have restrictions on the lower bound of diameters of the sets $A_i$ or $B_j$. In other words, it is possible that for some $A_i$ or $B_j$, their diameters are arbitrarily close to zero. 

While the elements $A_i$ or $B_j$ may not be geodesically convex, it is helpful to consider these elements as the $C^{1,\alpha}$-images of some convex domains or geodesic balls. In fact, if they are constructed to be geodesically convex, then one can take the obvious bound $F = D$ for arcs connecting two points in these sets. 

Intuitively, the refinement ``$A_i$'' describes how the elements ``$B_i$'' intersect each other. Therefore, one may ask about the relation between the number of diffeomorphic types in some collection of manifolds 
$$\Mm = \{M \text{ admits a good cover}\},$$
and the given parameters $V$, $D$ and $N$. Currently, we do not have an affirmative answer for this relation, which can be summarized in the following question.

\begin{question}
Is it true that if a collection of simply-connected Riemannian manifolds with diameter $D$ and volume $V$ only consists of $N$ diffeomorphic types, then the length of a shortest closed geodesic can be bounded by some function of $D$, $V$, and $N$?
\end{question}

The first study of the length of a shortest closed geodesic on Riemannian manifolds (which are homeomorphic to $T^2$ or $\mathbb{R}P^2$) is due to C.~Leowner and P.~Pu, appears in the work \cite{pu1} of Pu. For simply-connected manifolds, C. Croke \cite{croke1988area} obtained the first upperbound for Riemannian $S^2$. M. Gromov asked in \cite{gromov1983filling} whether the length of a shortest periodic geodesic in a $n$-dimensional Riemannian manifold $M^n$ can be bounded by $c(n)\vol (M)^{1/n}$. Similar questions can be asked for the diameter $D$ of the manifold. There are various theories developed to study this question, one may refer to \cite{katz} for an overview of this topic.

It is a theorem of Lusternik and Fet that on every compact simply-connected manifold, there exists at least one closed geodesic. (See, for example, \cite{klingenberg2012lectures}, or \cite{milnor}). The outline of the approach for obtaining an upper bound for the length of this closed geodesic is the following. Let us consider the loop space $\OM_p M$, which consists of all loops based at a fixed point \( p \in M \). Suppose the smallest integer \( m \) satisfies \(\pi_{m+1}(M) \neq 0\). If one can construct a "small" non-contractible \( m \)-dimensional sphere in \(\OM_p M\), that is, a non-contractible map \( S^m \to \OM_p^L M \), where \(\OM_p^L M\) denotes the subspace of \(\OM_p M\) consisting of loops of length \(\leq L\), then, by a Morse-type argument, there exists a closed geodesic of length \(\leq L\). This geodesic arises as a critical point of the length functional on the free loop space \(\Lm M\). 

A.~Nabutovsky and R.~Rotman demonstrated in \cite{nabutovsky2013length} that the primary obstruction to constructing these "small" non-contractible spheres is the existence of certain "short" closed geodesics on \( M \) (see \cite[Corollary 5.4]{nabutovsky2013length}). To formalize these ideas, they introduced the notions of the depth of a loop ( \cite[Definition~7.1 \& 7.4]{nabutovsky2013length}) and the width of a homotopy.

\begin{definition}[Depth of a loop]\label{def3_d}
Let $M$ be a closed $n$-dimensional simply-connected Riemannian manifold with diameter $D$, and let $\gamma: S^1 \to M$ be a loop in $M$ based at $p$. We define the depth $S(\gamma)$ of $\gamma$ as the infimum of positive numbers $S$ such that $\gamma$ is contractible by a path homotopy through loops of length $\leq \length(\gamma) + S$. 

We also define $S_p(M, L)$ as 
\[
S_p(M, L) = \sup_{\length(\gamma) \leq L} S(\gamma),
\]
where the supremum is taken over all loops $\gamma$ of length $\leq L$ based at $p$. 
\end{definition}

\begin{definition}[Width of a homotopy]\label{def1_w}
Let $M$ be a Riemannian manifold, and let $\gamma_i: [0,1] \to M$, $i=1,2$, be two curves in $M$. Suppose $\gamma_1$ and $\gamma_2$ are homotopic, and let $H: [0,1] \times [0,1] \to M$ be a homotopy between $\gamma_1$ and $\gamma_2$. For each fixed $s \in [0,1]$, denote 
\[
H_s = H(s, \cdot): [0,1] \to M,
\]
the curve describing the trajectory of a point $H(s,0)$ during the homotopy. We define the width $\omega_H$ of the homotopy $H$ as
\[
w_H = \max_{s \in [0,1]} \text{length of } H_s.
\]
\end{definition}
And in particular, 
\begin{theorem}\label{thmD}
If for any closed curve $\gamma$ of length bounded by $L$, there exists a contraction of $\gamma$ with width bounded by some constant $W$, then
\[
S_p(M, L) \leq \max \{2L, 2W + 2D\}.
\]
\end{theorem}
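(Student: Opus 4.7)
Given a based loop $\gamma$ at $p$ with $L_\gamma := \length(\gamma) \leq L$, the hypothesis yields a free contraction $H \colon S^1 \times [0, 1] \to M$ of width $\leq W$ satisfying $H(\cdot, 0) = \gamma$ and $H(\cdot, 1) \equiv q$ for some $q \in M$. Let $\mu(t) := H(0, t) = H(1, t)$ denote the basepoint trajectory, a path from $p$ to $q$; under Definition~\ref{def1_w}, one has $\length(\mu) \leq W$. Using the diameter bound, fix a minimizing path $\alpha \colon [0, 1] \to M$ from $p$ to $q$ with $\length(\alpha) \leq D$. The goal is to assemble a path homotopy (based at $p$) from $\gamma$ to the constant loop at $p$, through loops of length at most $L_\gamma + \max\{2L, 2W + 2D\}$.

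I would build the homotopy in three stages. \textbf{Stage I} ($\tau \in [0, 1]$): define $\gamma_\tau := \mu|_{[0, \tau]} \cdot H(\cdot, \tau) \cdot (\mu|_{[0, \tau]})^{-1}$, a continuous family of loops based at $p$, interpolating $\gamma_0 = \gamma$ and $\gamma_1 = \mu \cdot \mu^{-1}$. \textbf{Stage II}: swap the basepoint escort $\mu$ for the shorter $\alpha$ to arrive at $\alpha \cdot \alpha^{-1}$; since $M$ is simply-connected, $\mu$ and $\alpha$ are path-homotopic as paths from $p$ to $q$, and the swap is arranged to go through based loops of length roughly $2(W + D)$ via a pinching homotopy. \textbf{Stage III}: retract $\alpha \cdot \alpha^{-1}$ to the constant loop at $p$ by uniformly shrinking $\alpha$, giving loops of length $\leq 2D$.

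The key length estimate is in Stage~I, where $\length(\gamma_\tau) \leq 2\length(\mu|_{[0, \tau]}) + \length(H(\cdot, \tau))$. The second term must be controlled in terms of $L_\gamma$ and $W$: I would replace $H$ by a piecewise-linear contraction built from finitely many sample trajectories $\mu_{s_1}, \ldots, \mu_{s_N}$ of points $\gamma(s_i)$, joined by short arcs whose total length is comparable to $L_\gamma$, so that intermediate free loops have length $\leq L_\gamma + 2W$. This yields Stage-I loops of length at most $L_\gamma + 4W$; combined with the $2(W + D)$ and $2D$ bounds from Stages II and III, the maximum loop length throughout the entire path homotopy is at most $L_\gamma + \max\{2L, 2W + 2D\}$, as needed. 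The $2L$ term absorbs the combinatorial overhead in the long-loop regime $L_\gamma \geq W + D$, while in the short-loop regime the $2W + 2D$ term from the escort management and retraction dominates.

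The main obstacle is precisely this control on intermediate free loops $H(\cdot, \tau)$ under the trajectory-based width definition, since without additional regularity on $H$ the intermediate loops can a priori be arbitrarily long; the PL approximation above must be carried out with explicit estimates tying the sampling density to $L_\gamma$ and $W$. A secondary obstacle is ensuring that the Stage~II escort swap is continuous and respects the length bound --- in the general simply-connected setting, the path-homotopy from $\mu$ to $\alpha$ is nontrivial to control, but in the good-cover setting of Theorem~\ref{thm_main} one can apply the fourth bullet of Definition~\ref{def_gc} directly to the short closed curve $\mu \cdot \alpha^{-1}$ to obtain a swap with explicit width control.
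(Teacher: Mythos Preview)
The paper does not prove Theorem~\ref{thmD} itself; it defers to \cite[Section~8]{nabutovsky2013length}. Your outline diverges from that argument and contains a genuine gap.

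Your Stage~I sweeps the free homotopy in the $\tau$-direction, producing based loops $\mu|_{[0,\tau]}\cdot H(\cdot,\tau)\cdot(\mu|_{[0,\tau]})^{-1}$, and you correctly note that $\length(H(\cdot,\tau))$ is not controlled by the width hypothesis. But your proposed PL repair does not close the gap. Even granting your claim that the rebuilt intermediate free loops have length $\leq L_\gamma+2W$, the Stage~I based loops then have length $\leq L_\gamma+4W$, and $4W$ can exceed $\max\{2L,\,2W+2D\}$ whenever $W>\max\{L,D\}$; the final inequality you assert simply does not follow. Stage~II is also unjustified: swapping $\mu$ for $\alpha$ requires a path-homotopy from $\mu$ to $\alpha$ whose intermediate paths have controlled length, and nothing in the hypotheses of Theorem~\ref{thmD} provides that. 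Your appeal to Definition~\ref{def_gc} at the end is not available here, since the theorem is stated for general $M$.

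The argument you want sweeps in the $s$-direction rather than the $\tau$-direction, and then none of these obstacles arise. Write $\mu_s(t)=H(s,t)$ for the trajectory of $\gamma(s)$, so that $\length(\mu_s)\leq W$ and $\mu_0=\mu_1$. For $s\in[0,1]$ set
\[
\beta_s \;:=\; \gamma|_{[0,s]}\,\cdot\,\mu_s\,\cdot\,\overline{\mu_0},
\]
a loop based at $p$ of length $\leq L_\gamma+2W$, varying continuously in $s$ because $H$ is continuous. Since $\beta_1=\gamma\cdot\mu_0\cdot\overline{\mu_0}$ and $\beta_0=\mu_0\cdot\overline{\mu_0}$, retracting the tail $\mu_0\cdot\overline{\mu_0}$ on either end connects $\gamma$ to the constant loop through based loops of length $\leq L_\gamma+2W$. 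This already yields $S(\gamma)\leq 2W\leq\max\{2L,\,2W+2D\}$, with no need to bound $\length(H(\cdot,\tau))$, no PL approximation, and no escort swap.
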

The proof of this inequality can be found in \cite[Section~8]{nabutovsky2013length}. In \cite{nabutovsky2013length}, by taking the base point $p = q = x$ in Theorem~7.3 and applying Corollary~5.4, Nabutovsky and Rotman proved the following result:

\begin{theorem}\label{thm4_q}
Let $M^n$ be a closed Riemannian manifold of diameter $D$, and let $p$ be a point in $M$. Suppose $S \geq 0$, and there exists $k \in \mathbb{N}$ such that there is no geodesic loop of length in $((2k-1)D, 2kD]$ based at $p$, which is a local minimum of the length functional on $\OM_p M$ of depth $> S$. Then for every positive integer $m$, any map $f: S^m \to \OM_p M$ is homotopic to a map $\tilde{f}: S^m \to \OM_p^{L + o(1)} M$, where
\[
L = ((4k+2)m + (2k-3))D + (2m-1)S.
\]

In this case, the length of the shortest closed geodesic on $M$ does not exceed 
\[
L = ((4k+2)m + (2k-3))D + (2m-1)S.
\]
\end{theorem}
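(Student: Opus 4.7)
The plan is to prove the theorem by inducting over the skeleta of a triangulation of $S^m$, using a Birkhoff-type length-decreasing flow together with the depth hypothesis to pass critical loops in the forbidden length range, and then deducing the existence of a short closed geodesic from a standard Morse argument on the free loop space.

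First I would fix a fine triangulation $T$ of $S^m$ and, using $\diam(M)\leq D$, replace $f$ by a piecewise-controlled representative that assigns to each vertex a loop in $\OM_p M$ and to each higher simplex a family of loops of controlled variation. The inductive goal is to deform this representative on the $j$-skeleton $T^{(j)}$ into $\OM_p^{L_j}M$, with $L_0\leq L_1\leq\cdots\leq L_m=L$.

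For the base case $j=0$, I would apply a length-decreasing flow to each vertex loop $\gamma_v$; it stalls only at a geodesic loop $\alpha$ based at $p$ that is a local minimum of length. If $\length(\alpha)\in ((2k-1)D,2kD]$, the depth hypothesis supplies a homotopy of $\alpha$ through loops of length $\leq 2kD+S$, releasing the flow to continue. Iterating, each $\gamma_v$ is brought into $\OM_p^{(2k-1)D+o(1)}M$ through loops of length $\leq 2kD+S$. For the inductive step $j-1\to j$, each $j$-simplex $\sigma$ carries a family of short loops parametrized by $\partial\sigma$, and one must fill in a family parametrized by $\sigma$. I would do this by a ``folded'' construction: break each boundary loop at a point joined to $p$ by an arc of length $\leq D$, shorten the two resulting halves by the base-case procedure, and contract each half using the depth bypass. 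The additive cost per dimension is at most $(4k+2)D+2S$, so $m$ successive extensions produce the stated bound $L=((4k+2)m+(2k-3))D+(2m-1)S$.

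For the second conclusion, pick the smallest $m$ with $\pi_{m+1}(M)\neq 0$ (which exists since $M$ is simply connected and not contractible, by Lusternik--Fet) and a nontrivial $f\colon S^m\to\OM_p M$. If no closed geodesic of length $\leq L$ existed, standard Morse theory on $\Lm M$ would exhibit $\OM_p^{L+o(1)}M$ as deformation-retracting onto the constant loop, contradicting the nontriviality of $f$ after the homotopy above. The main obstacle is the inductive extension step: the depth-bypass homotopies across different simplices must agree on their common faces, so that the deformation is globally well-defined on $S^m$ rather than a simplex-by-simplex patchwork. This coherence is precisely where Theorem~\ref{thmD}'s link between depth and width becomes crucial, and it is what keeps the length cost per dimension linear in $m$ rather than exponential.
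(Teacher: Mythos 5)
The paper does not actually prove Theorem~\ref{thm4_q}; it is quoted directly from \cite{nabutovsky2013length}, where it is obtained by combining their Theorem~7.3 (with $p=q=x$) and their Corollary~5.4. So there is no proof in this paper for your argument to be compared against---you are reconstructing the Nabutovsky--Rotman argument from scratch.

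Taken on its own, your outline captures the right ideas (inductive skeletal extension, Birkhoff-type flow, depth bypass), but there are two concrete gaps. First, the length arithmetic does not close: you claim a $0$-skeleton bound of $(2k-1)D+o(1)$ and a per-dimension increment of $(4k+2)D+2S$, so after $m$ extensions the bound would be $(2k-1)D+m\bigl((4k+2)D+2S\bigr)$, which exceeds the stated $L=((4k+2)m+(2k-3))D+(2m-1)S$ by $2D+S$; the formula's ``constant term'' corresponds to a formal $m=0$ value of $(2k-3)D-S$, which can be negative, so the induction must really begin with a direct bound of $(6k-1)D+S$ for the $m=1$ case rather than from a $0$-dimensional base. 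Second, and more seriously, your appeal to Theorem~\ref{thmD} for the face-compatibility of the depth bypasses goes in the wrong direction: Theorem~\ref{thmD} converts a width bound on contractions into a depth bound ($S_p(M,L)\leq\max\{2L,2W+2D\}$), whereas at the inductive step you need the converse---a controlled filling of a sphere's worth of loops extracted from the hypothesis that local minima in $((2k-1)D,2kD]$ have depth $\leq S$. The depth hypothesis only controls the overshoot of a single contraction, not the mutual compatibility of a family of contractions parametrized by a simplex, and making those homotopies agree on shared faces is precisely the hard content of Theorem~7.3 in \cite{nabutovsky2013length}. It cannot be dispatched by citing Theorem~\ref{thmD}.
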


Therefore, it is enough to show that
\begin{theorem}\label{thmW}
  Let $M$ be a complete, $n$-dimensional, simply-connected Riemannian manifold with diameter $D$ and volume $V$ that admits a good cover consisting of $N$ elements. Suppose that there is no closed geodesic on $M$ with length bounded by $3D$, then the for every closed curve $\ga:S^1\rightarrow M$, there is a homotopy $H$ that contracts $\ga$ with
\[
w_H \leq B(V, D, N),
\]
where $B$ is a function that only depends on $V$, $D$ and $N$.  
\end{theorem}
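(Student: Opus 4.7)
The plan is to follow the strategy of \cite{r_zzf}, adapted to the abstract good cover setting of Definition~\ref{def_gc}. The idea is to subdivide $\ga$ using the refinement $\ti\Oo$, replace it piecewise by a polygonal loop on a finite graph of chosen ``centers,'' and then contract that polygonal loop using the simple-connectedness of $M$ together with the no-short-geodesic hypothesis.

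\textbf{Step 1 (Subdivision).} First I would parametrize $\ga:S^1\to M$ and subdivide $\ga = \ga_1 * \ga_2 * \cdots * \ga_m$ so that each arc $\ga_i$ lies in some $A_{j(i)} \in \ti\Oo$, with consecutive arcs meeting at a break-point $q_i \in A_{j(i)} \cap A_{j(i+1)}$. The integer $m$ depends on $\ga$ but plays no role in the final width estimate, since width is a supremum.

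\textbf{Step 2 (Reduction to a polygonal loop).} I would fix once and for all a distinguished vertex $p_j \in A_j$ for each refinement element (at most $N$ vertices). Using the first clause of Definition~\ref{def_gc}, I join each break-point $q_i$ to $p_{j(i)}$ by an arc $\sg_i^- \subset A_{j(i)}$ and to $p_{j(i+1)}$ by an arc $\sg_i^+ \subset A_{j(i+1)}$, each of length $\leq F_A(V,D)$. For each $i$ the loop
\[
L_i \;=\; (\sg_{i-1}^+)^{-1} * \ga_i * \sg_i^-
\]
based at $p_{j(i)}$ lies entirely in $A_{j(i)}$, hence in some $B_{k(i)} \in \Oo$ by the third clause of Definition~\ref{def_gc}. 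The fourth clause then provides a contraction $H_i$ of $L_i$ inside $B_{k(i)}$ with $w_{H_i} \leq G(V,D)$. Performing the $H_i$ simultaneously on the corresponding sub-arcs of $\ga$ (holding the connecting arcs $\sg_i^{\pm}$ fixed) produces a homotopy of width $\leq G(V,D)$ from $\ga$ to the polygonal loop
\[
\ba\ga \;=\; \sg_0^+ * \tau_1 * \tau_2 * \cdots * \tau_{m-1} * (\sg_m^-)^{-1}, \qquad \tau_i = (\sg_i^-)^{-1} * \sg_i^+,
\]
whose edges lie in a fixed graph $\Ga \subset M$ with at most $N$ vertices and at most $\binom{N}{2}$ edge-classes, each of length $\leq 2F_A(V,D)$.

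\textbf{Step 3 (Contracting the polygonal loop).} Since $M$ is simply-connected, $\ba\ga$ is null-homotopic, and I would produce a contraction by a sequence of elementary moves, each of bounded width: \emph{(a)} contraction of a sub-loop of $\ba\ga$ that lies in a single $B_k$, of width $\leq G(V,D)$ by the fourth clause of Definition~\ref{def_gc}; and \emph{(b)} Birkhoff curve-shortening of any sub-loop of length $\leq 3D$, which under the no-short-geodesic hypothesis must terminate at a point rather than a closed geodesic, the curve-shortening homotopy having width bounded by the initial length. The reduction of $\ba\ga$ to the trivial loop then proceeds combinatorially: the kernel of $\pi_1(\Ga) \to \pi_1(M)=0$ is generated by nerve-type relators each of which either lies inside a single $B_k$ (handled by move (a)) or has length bounded in terms of $N$ and $F_A$ (handled by move (b)). Because $w_H$ is a supremum, the total width of the composite contraction equals the maximum single-move width, bounded by a function $B(V,D,N)$.

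\textbf{Main obstacle.} The delicate point is \textbf{Step 3}: formalizing the combinatorial reduction and verifying that every null-homotopic polygonal loop in $\Ga$ admits a contraction through elementary moves of uniformly bounded width, \emph{independently} of the combinatorial length of $\ba\ga$. This is precisely where the interaction between the third and fourth clauses of Definition~\ref{def_gc} is essential---property (iii) allows one to fill nerve-triangles by disks inside individual $B_k$'s, while property (iv) bounds the widths of those fillings---and where the no-short-geodesic hypothesis is indispensable, as it is exactly what lets Birkhoff curve-shortening contract (rather than stabilize at) the short loops that unavoidably appear during the reduction.
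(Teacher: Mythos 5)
Your Steps 1 and 2 correctly parallel the paper's Lemma~\ref{lm_simp}: you approximate $\ga$ by a loop on the fixed nerve-type graph via a homotopy of width $\leq G_\Sg(V,D)$. The genuine gap is in Step~3, and specifically in the sentence ``Because $w_H$ is a supremum, the total width of the composite contraction equals the maximum single-move width.'' This is false. The width $w_H=\max_s \length(H_s)$ is a supremum over the \emph{curve} parameter $s$, not over the \emph{time} parameter. When homotopies $H_1,H_2,\dots$ are concatenated sequentially in time, the trajectory $H_s$ of a fixed point $\ga(s)$ is the concatenation of the trajectories under each $H_i$, so the widths \emph{add}: one has $w_H\leq \sum_i w_{H_i}$, and this is sharp. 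Since you have no bound on the number of elementary moves needed to reduce an arbitrary $\ba\ga$, the accumulated width is unbounded under your argument. (The isoperimetric/Dehn-function bound you would need for the nerve graph does not follow from simple-connectedness alone.) You partially recognize the delicacy in your ``main obstacle'' paragraph, but you misdiagnose it as a matter of bounding the width of each elementary move uniformly, when the real issue is bounding how many moves a single point endures.

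This is exactly what the paper's proof is organized around. After the simplicial approximation, the paper runs Birkhoff curve shortening on the original loop, subdivides the resulting one-parameter contraction into time-steps of width $\leq D$, takes simplicial approximations of the intermediate loops, and whenever the simplicial length exceeds the threshold $X(V,D)$ uses Lemma~\ref{lm_brk} to split the loop into strictly shorter pieces. This generates a finite \emph{tree} of loops (Step~(3)), and the width of the resulting contraction is linear in the tree \emph{height} (Step~(4)). The crux is then Step~(5): by Lemma~\ref{lm_rep}, two loops with the same simplicial approximation are homotopic with width $\leq Y(V,D)$, and since there are at most $(N^2+1)^X$ simplicial loops of length $\leq X$, any root-to-leaf branch of height $> (N^2+1)^X$ contains a repeated approximation and can be short-circuited. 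This pigeonhole on the finite combinatorial data (and only here does $N$ enter) is the missing ingredient that converts a sum of widths over an unbounded number of steps into a bound $B(V,D,N)\leq (N^2+1)^X\cdot(7D+2Y+W)$.
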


\section{Nerve of the cover and simplicial approximation}

If a manifold admits a good cover, we may then approximate every contraction of the closed curves in the manifold with homotopies through some particular loops, i.e., cycles
in the 1-skeleton of the nerve of the covering $\ti{\Oo}$. More precisely, let us consider the following.
\begin{definition}\label{def_graph}
  Let $M$ be a closed Riemannian manifold and $\ti{\Oo}=\{A_i\}_{i=1}^{N}$ the refinement of a good covering
  as in Definition~\ref{def_gc}.
  We associate to $(M,\ti{\Oo})$ a graph $\Sg$, which is essentially the nerve of the
  covering $\ti{\Oo}$, constructed in the following way.
   \begin{enumerate}
     \item For each element $A_i\in \ti{\Oo}$, we pick a point $x_i\in A_i$ as a vertex
      in $\Sg$.
     \item If for some $1\leq i<j \leq N$, the open sets $A_i\cap A_j \neq \emptyset$,
     then we connect
     the corresponding vertices $x_i$ and $x_j$ with curve $\ga_{ij}$ of length bounded by $2F_A(V,D)$ whose image is contained in $A_i\cup A_j$.
   \end{enumerate}
\end{definition}

In fact, if $A_i\cap A_j \neq \emptyset$, one may pick a point $z$ in $A_i\cap A_j$ and then the vertices $x_i$ and $x_j$ can be connected by arcs from $x_i$ to $z$ and from $z$ to $x_j$.

\begin{remark}
  Note that the number of the vertices in $\Sg$ equals to N,
  which is the number of the elements
  in $\Oo$. And the number of distinct edges in $\Sg$ is at most $\binom{2}{N}$.
\end{remark}

\begin{definition}\label{def_sp}
 A simplicial curve $\al$ in $\Sg$ is a simplicial map
 $\al:[0,1]_{\triangle}\ra \Sg$,
 where $[0,1]_{\triangle}$ is constructed by partitioning
 the interval $[0,1]$ into $L$ sub-intervals with $0 = t_0 < \dots < t_L =
 1$, and $L\geq 1$ is an integer. We define the simplicial length $m(\al)$
 of $\al$ to be the number of edges in $\al$. In other words, $m(\al)=L$.
 When $\alpha(0) = \alpha(1)$, we refer to $\alpha$ as a loop within $\Sigma$.

 Note that since $\Sg$ consists of edges being curves in $M$, one may
 always view $\alpha:[0,1]\ra \Sg$ as a map $\al:[0,1]\ra M$ through the natural
 embedding $\Sg\hookrightarrow M$. Through out this section, we assume that our manifold $(M,g)$ admits a good cover in the sense of Definition~\ref{def_gc} with diameter $D$ and volume $V$.
\end{definition}

Given a smooth curve $\ga:[0,1]\ra M$, we first show that there
exists a simplicial curve $\al$ in $\Sg\hookrightarrow M$ which is homotopic to $\ga$ though
a homotopy with bounded width. The curve $\al$ is called a simplicial approximation
of the curve $\ga$.  The construction in Lemma~\ref{lm_simp} were originally used in the work \cite{rotman2000upper} of R.~Rotman.

\begin{lemma}\label{lm_simp}
For any curve $\ga:[0,1]\ra M$, there exists a simplicial curve $\al:[0,1]\ra \Sg$
such that $\ga$ is homotopic to $\al$ through a homotopy $H$ of width $w_H\leq G_\Sg(V,D)$.
\end{lemma}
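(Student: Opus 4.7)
The idea is to subdivide $\ga$ into small arcs, each lying inside a single set of the refinement $\ti\Oo$, and then replace each arc by an edge of the nerve graph $\Sg$; the width control will come entirely from the contraction axiom of Definition~\ref{def_gc}. Concretely, I apply the Lebesgue number lemma to the open cover $\ti\Oo$ of the compact set $\ga([0,1])$ to obtain a partition $0=t_0<t_1<\dots<t_L=1$ and indices $j_1,\dots,j_L$ with $\ga([t_{k-1},t_k])\su A_{j_k}$. The points $y_k:=\ga(t_k)$ lie in $A_{j_k}\cap A_{j_{k+1}}$, so by Definition~\ref{def_graph} the consecutive vertices $x_{j_k}, x_{j_{k+1}}$ are joined by an edge $\ga_{j_k j_{k+1}}$ of $\Sg$, which one may arrange to pass through $y_k$. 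The target simplicial curve $\al$ is the concatenation of these edges, prepended and appended by short arcs in $A_{j_1}$ and $A_{j_L}$ of length $\leq F_A(V,D)$ to reconcile the endpoints of $\ga$ with $x_{j_1}$ and $x_{j_L}$.

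Next, for each $k$ I construct a local path homotopy $H_k$ between $\ga|_{[t_{k-1}, t_k]}$ and the ``detour'' $\sigma_k\tau_k$ going $y_{k-1}\to x_{j_k}\to y_k$, where $\sigma_k, \tau_k\su A_{j_k}$ have length $\leq F_A(V,D)$ by the first axiom of Definition~\ref{def_gc}. Applying the third axiom with $i=j=j_k$ yields $A_{j_k}\su B_{l_k}$ for some $B_{l_k}\in\Oo$, so the closed curve $\ga|_{[t_{k-1},t_k]}\cdot \ba{\tau_k}\cdot\ba{\sigma_k}$ is null-homotopic in $B_{l_k}$ with width $\leq G(V,D)$ by the fourth axiom. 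A standard surgery on the contraction disk --- split the boundary loop at the two distinguished points $y_{k-1}, y_k$ and use the two radial fibers from the basepoint of the contraction to these points as dividing chords --- converts this null-homotopy into a path homotopy with endpoints fixed and width $\leq G(V,D)+2F_A(V,D)$. A second round of the same trick, this time inside the $B$-set containing $A_{j_k}\cup A_{j_{k+1}}$ (which exists by the third axiom since $y_k\in A_{j_k}\cap A_{j_{k+1}}$), deforms each consecutive pair $\tau_k\sigma_{k+1}$ onto the corresponding edge $\ga_{j_k j_{k+1}}$ of $\Sg$ with the same kind of width bound.

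Finally, I assemble the total homotopy $H$ by running these $L$ local homotopies in disjoint time slots along the parameter $[0,1]$, so that at each moment the nontrivial part of the deformation is supported in a single $B$-set. Every trajectory of $H$ is then the concatenation of at most two such local trajectories, giving $w_H\leq G_\Sg(V,D)$ with $G_\Sg(V,D):=2G(V,D)+C\,F(V,D)$ for an absolute constant $C$ --- crucially, this bound is independent of the number of partition pieces $L$ and hence of the curve $\ga$ itself. The main obstacle is the conversion step: one must verify that cutting the null-homotopy disk along fibers terminating at $y_{k-1}$ and $y_k$ does not inflate the width, which reduces to the observation that those fibers are themselves of length $\leq G(V,D)$ as individual trajectories of the original contraction. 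Once that reduction is in hand, the simplicial approximation $\al$ and the width bound $w_H\leq G_\Sg(V,D)$ follow.
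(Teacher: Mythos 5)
Your proof follows essentially the same strategy as the paper's: subdivide $\gamma$ via the refinement $\ti{\Oo}$, connect the sample points $\ga(t_k)$ to vertices of $\Sg$ by arcs of length $\leq F_A$, form small loops in the $B$-sets, contract them using the good-cover axiom, and recover path homotopies with endpoints fixed whose widths are bounded independently of the number of subdivision pieces. The only organizational difference is that you stage the deformation in two rounds (first to V-shaped detours $\sigma_k\tau_k$ through a single vertex $x_{j_k}$, then a second pass replacing each $\tau_k\sigma_{k+1}$ with the edge $\ga_{j_k j_{k+1}}$), whereas the paper performs it in a single round with the detour $\sg_{ik}\cup\ga_{kl}\cup\sg_{i+1,l}$ through two vertices and the connecting edge at once, followed by cancellation of the backtracking arcs $\sg_{ik}\cup-\sg_{ik}$; both routes give a bound of the form $G_\Sg(V,D)=O(G)+O(F)$.
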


\begin{proof}
Let us denote by $\Oo=\{A_i\}$ the refined cover of the manifold as before.
Since the image of $\ga$ is path-connected, we may choose a sufficiently fine subdivision
of $[0,1]$, say, $0=t_0<t_1<\dots<t_T=1$, such that if $\ga(t_i)\in A_k$
and $\ga(t_{i+1})\in A_l$, then the intersection
$A_k\cap A_l$ is nonempty,
and the segment $\ga([t_i,t_{i+1}])\su A_k\cup A_l$.

Now we may construct the simplicial approximation in the following way. 
Let us connect $\gamma(t_i)$ with $x_k$ by an arc $\sg_{ik}$ of length $\leq F_A$, where $x_k$ is the vertex in $\Sg$ corresponding to $A_k$. 
(and connect $\ga(t_{i+1})$, $x_l$ by $\sg_{i+1,k}$, resp.) Then $ \ga([t_k,t_{k+1}])\cup -\sg_{i+1,k} \cup  -\ga_{kl}\cup \sg_{ik}$ forms
a loop in $A_k\cup A_l$. By the assumption of the good cover, there is a contraction of this loop in some $B_s$ with width $\leq G$, 
which, in particular, induces a homotopy between $\ga([t_i,t_{i+1}])$ and $\sg_{i+1,k} \cup  \ga_{kl}\cup -\sg_{ik}$ with width $\leq G+2F_A$.

By applying this to all the intervals in the subdivision, and observe that for 
consecutive intervals, the arc $\sg_{ik}\cup-\sg_{ik}$ can be contracted to $x_k$ through a homotopy with width $\leq 2F_A$. 
Therefore, we conclude that $\gamma$ is homotopic to $\al:= \cup \gamma_{ik}$ though a homotopy with width $\leq G_\Sg(V,D):=G+4F_A$.
\end{proof}

We will present several essential technical lemmas, apart from the above Simplicial Approximation Lemma~\ref{lm_simp}, that will be used in
the proof of our main results. It's worth noting that these results aligns with those outlined in our paper~\cite{r_zzf}. While we are not providing detailed proofs here, we will present these statements for the sake of completeness.

\begin{lemma}\label{approx minimizing geodesic}
 Let $\ga:[0,1]\ra M$ be a minimizing geosdesic, and $\al$ its approximation as in Lemma~\ref{lm_simp}. Then the simplicial length of $\al$ is bounded by $Z(V,D)$.
\end{lemma}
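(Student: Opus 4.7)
The plan is to exploit the minimality of $\gamma$, which forces each cover element to interact with $\gamma$ only within a short arc-length window. Parametrize $\gamma$ by arc length on $[0,L]$, where $L = \length(\gamma) \leq D$. For any $s_1 < s_2$ in $V_k := \gamma^{-1}(A_k)$, the two points $\gamma(s_1), \gamma(s_2)$ lie in $A_k$ and can be joined within $A_k$ by a curve of length at most $F_A(V,D)$; since $\gamma|_{[s_1,s_2]}$ realizes the distance in $M$, it follows that
\[
s_2 - s_1 \;=\; \dist\bigl(\gamma(s_1), \gamma(s_2)\bigr) \;\leq\; F_A(V,D).
\]
Hence $V_k$ is contained in an interval $I_k \subseteq [0,L]$ of length at most $F_A$, for every visited cover element $A_k$.

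Since $\gamma([0,L]) \subseteq M = \bigcup A_k$, the at-most-$N$ intervals $\{I_k\}$ cover $[0,L]$. I would then apply the standard greedy interval-cover algorithm to extract an ordered chain $I_{k_1},\ldots,I_{k_T}$ covering $[0,L]$ with $T \leq N$ and $I_{k_i} \cap I_{k_{i+1}} \neq \emptyset$ for each $i$. Choosing partition times $0 = t_0 < t_1 < \cdots < t_T = L$ with $t_i \in V_{k_i}$ so that each segment $\gamma([t_{i-1},t_i])$ is contained in $A_{k_{i-1}} \cup A_{k_i}$ then produces the desired simplicial approximation of simplicial length at most $cN$ for some universal constant $c$. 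Setting $Z(V,D) := cN$ (which depends only on $V,D$ through the good-cover parameters) completes the argument.

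The main obstacle is that interval overlap $I_{k_i} \cap I_{k_{i+1}} \neq \emptyset$ does not guarantee set overlap $V_{k_i} \cap V_{k_{i+1}} \neq \emptyset$: even if both intervals share a time $t^{\ast}$, the point $\gamma(t^{\ast})$ may fail to lie in $A_{k_i} \cap A_{k_{i+1}}$, since $V_k$ is typically a proper subset of $I_k$. To overcome this, I would insert at most one intermediate partition point per transition, routing through a third cover element containing $\gamma$ at that moment. Conditions 3 and 4 of Definition~\ref{def_gc} (namely $A_i \cup A_j \subseteq B_s$ whenever $A_i \cap A_j \neq \emptyset$, and contractibility of loops in $B_s$ with width $\leq G$) then bound the additional width contribution by a universal constant and keep the total simplicial length linear in $N$, yielding the bound $Z(V,D)$.
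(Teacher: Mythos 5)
The paper itself does not prove this lemma; it defers to \cite{r_zzf}, so a direct line-by-line comparison is not possible. Your central observation, though, is the right one: since $\gamma$ is minimizing and parametrized by arc length, any two times $s_1<s_2$ with $\gamma(s_1),\gamma(s_2)\in A_k$ satisfy $s_2-s_1=\dist(\gamma(s_1),\gamma(s_2))\leq F_A(V,D)$, so $\gamma^{-1}(A_k)$ is trapped in an interval $I_k$ of length at most $F_A$. This is a clean use of the minimizing hypothesis and is almost certainly the engine behind the bound.

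The assembly step, however, has a real gap that your proposed patch does not close. The approximation of Lemma~\ref{lm_simp} requires a subdivision $0=t_0<\cdots<t_T=L$ with $\gamma([t_{i-1},t_i])\subseteq A_{k_{i-1}}\cup A_{k_i}$, not merely $A_{k_{i-1}}\cap A_{k_i}\neq\emptyset$. An overlap $I_{k_{i-1}}\cap I_{k_i}\neq\emptyset$ of the \emph{hulls} gives neither: the set $V_k=\gamma^{-1}(A_k)$ can have many connected components inside $I_k$, and on each gap between components the geodesic sits in other cover elements $A_m$. Thus a single segment $\gamma([t_{i-1},t_i])$ produced by the greedy chain may leave $A_{k_{i-1}}\cup A_{k_i}$ many times, and ``one intermediate partition point per transition'' cannot repair all of these excursions. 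What is needed is either (i) a fine-then-reduce argument — take the fine subdivision of Lemma~\ref{lm_simp} and show that after collapsing repeated consecutive vertices, the $F_A$-localization forces the reduced walk to have bounded length — or (ii) a ``last-exit'' chain $s_{j+1}=\sup\gamma^{-1}(A_{k_j})$, which does produce distinct indices $k_j$ (so $\leq N$ steps), but still leaves the segment-containment requirement to be handled. Either way the count you get is of order $N$, and your closing parenthetical that $cN$ ``depends only on $V,D$'' is only accurate under the implicit assumption $N=N(V,D)$ (cf.\ Remark~\ref{rm_fup}); in the paper's general setting $N$ is an independent parameter, so the bound should honestly be written as a function of $V,D,N$.
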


\begin{lemma}\label{lm_brk}
 Let $\ga:[0,1]\ra M$ be a closed curve, and $\al$ its approximation as in Lemma~\ref{lm_simp}. Then there are explicit functions $X(V,D)$, $W(V,D)$ and $\de(M)$, such that if the simplicial length $m(\al)>X(V,D)$, then $\ga$ is homotopic to $\ga_1\cup \ga_2\cup \dots \cup \ga_k$ through a homopoty $H:[0,1]\z[0,1]\ra M$ which satisfies the following.
 \begin{enumerate}
   \item The width of the homotopy $w_H\leq W(V,D)$.
   \item If $\al_i$ is the simplicial approximation of each $\ga_i$, then $m(\al_i)\leq X(V,D)$.
   \item The length of each $\ga_i\leq \length(\ga)-\de$. 
 \end{enumerate}
\end{lemma}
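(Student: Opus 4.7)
The plan is to use pigeonhole on the finite vertex set of $\Sg$ to force self-intersections of $\al$ when its simplicial length is large, and then to split $\ga$ at the corresponding parameter values using the short connecting arcs built in Lemma~\ref{lm_simp}. Specifically, I would set $X(V,D) := N$, so that if $m(\al)>X$ then by pigeonhole there are two distinct intervals $[t_i,t_{i+1}]$ and $[t_j,t_{j+1}]$ of the subdivision from the proof of Lemma~\ref{lm_simp} that get sent to the same vertex $x_k$ in $\Sg$; in particular, both $\ga(t_i)$ and $\ga(t_j)$ lie in $A_k$.

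Given such a repetition, I would define two sub-loops based at $x_k$ using the connecting arcs $\sg_{i,k}$ and $\sg_{j,k}$ (each of length $\leq F_A(V,D)$) from Definition~\ref{def_graph}:
\[
\ga' := \sg_{i,k}\cup \ga|_{[t_i,t_j]}\cup (-\sg_{j,k}), \qquad \ga'' := \sg_{j,k}\cup \ga|_{[t_j,1]}\cup \ga|_{[0,t_i]}\cup (-\sg_{i,k}).
\]
The homotopy from $\ga$ to the wedge $\ga'\cup\ga''$ arises by inserting the trivial loops $\sg_{i,k}\cup(-\sg_{i,k})$ and $\sg_{j,k}\cup(-\sg_{j,k})$ and deforming inside $A_k$; this costs width at most $2F_A(V,D)$, which combined with the $B_s$-contractions from Lemma~\ref{lm_simp} gives $W(V,D):=G(V,D)+4F_A(V,D)$. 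Recursing on whichever subloop still has simplicial length exceeding $X$, after finitely many iterations I obtain the decomposition $\ga\sim \ga_1\cup\cdots\cup\ga_k$ with $m(\al_i)\leq X$ for every $i$, and the total width of the combined homotopy is still bounded by $W(V,D)$ since each splitting step contributes its cost locally.

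The hard part is condition (3). A naive split satisfies only $\length(\ga')+\length(\ga'')\leq \length(\ga)+2F_A(V,D)$, which would permit one piece to be nearly as long as $\ga$ if the other were nearly trivial. To extract a uniform $\de(M)>0$, I would discard every subloop that is already contractible with width $\leq W(V,D)$, absorbing its contraction into the total homotopy, and retain only the ``essential'' subloops as the $\ga_i$. Because $M$ is fixed and the combinatorial types of simplicial loops in $\Sg$ form a finite set (bounded in terms of $N$), a compactness argument over these essential loop types yields a positive lower bound $\de(M)$ on the length removed at each split, giving $\length(\ga_i)\leq \length(\ga)-\de(M)$. Verifying this cleanly---choosing the right notion of ``essential,'' absorbing the bookkeeping of the arcs $\sg_{i,k}$ without inflating the width, and ensuring that recursion does not degrade the bound---is the main obstacle of the proof.
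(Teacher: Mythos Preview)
The paper does not actually prove Lemma~\ref{lm_brk}; immediately before stating it, the authors write that these technical lemmas ``align with those outlined in our paper~\cite{r_zzf}'' and that they ``are not providing detailed proofs here.'' So there is no in-paper argument to compare against, only the indication that the proof is imported from \cite{r_zzf}.

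That said, your pigeonhole-and-split strategy is the natural one and is almost certainly the skeleton of the argument in \cite{r_zzf}. Two points deserve attention.

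\emph{Choice of $X$.} You set $X(V,D):=N$, but the Remark following Lemma~\ref{lm_wsep} states explicitly that the constants in Lemmas~\ref{approx minimizing geodesic}--\ref{lm_wsep} do \emph{not} depend on $N$. This is not cosmetic: in the final width estimate the exponent is $X$ (the bound $B\le (N^2+1)^X\cdot(\cdots)$ in the proof of Theorem~\ref{thmW}), so an $N$-dependent $X$ would change the shape of the answer. The intended $X$ is presumably built from $Z(V,D)$ of Lemma~\ref{approx minimizing geodesic}---one connects the split points back to a fixed basepoint by minimizing geodesics, whose simplicial approximations contribute at most $Z(V,D)$ edges each, and then chooses $X$ in terms of $Z$ so that each resulting $\al_i$ stays below $X$. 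Your direct pigeonhole on the vertex set bypasses Lemma~\ref{approx minimizing geodesic} entirely, which is a hint that it is not the route taken.

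\emph{Condition (3).} This is where your sketch has a genuine gap. Your proposal to ``discard every subloop that is already contractible with width $\le W(V,D)$'' is circular: the whole enterprise (Theorem~\ref{thmW}) is to \emph{produce} such width bounds, so you cannot assume them for the pieces. Likewise, the ``compactness argument over essential loop types'' is not a proof: the finitely many simplicial types in $\Sg$ constrain the \emph{simplicial} length of the $\al_i$, not the Riemannian length of the $\ga_i$, and there is no lower bound on edge lengths in $\Sg$ to convert one into the other. A workable $\de(M)$ has to be something concrete extracted from the fixed manifold and cover (e.g.\ a Lebesgue-type number for $\ti{\Oo}$, or the minimum length one must travel to leave one $A_k$ and reach a non-adjacent $A_l$), together with a choice of split points guaranteeing that \emph{each} retained arc of $\ga$ carries at least that much length before the connecting arcs are appended. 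Your recursion also needs to track a common basepoint for all the $\ga_i$---the later tree construction (step (4)(c) in the proof of Theorem~\ref{thmW}) uses that the pieces produced by Lemma~\ref{lm_brk} share one---which your local splittings at varying vertices $x_k$ do not yet arrange.
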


\begin{lemma}\label{lm_wsep}
Let $\ga:[0,1]\ra M$ be a closed curve. Suppose that $\ga=\cup_{i=1}^n \ga_i$, where each $\ga_i$ is a closed curve with base point $p$. If each $\ga_i$ can be contracted to a point in $M$ through a homotopy with width bounded by $W_i$, then there exists a homotopy $H:[0,1]\z [0,1]\ra M$ such that $H(t,0)=\ga(t)$ and $H(t,1)=p$. And the width $\om_H$ of this homotopy is bounded by $2\cdot \max_i W_i$.
\end{lemma}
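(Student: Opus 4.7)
The plan is to contract the loops $\ga_1, \dots, \ga_n$ sequentially in disjoint time-phases. Without loss of generality (by reparameterizing $\ga$, which leaves $w_H$ invariant), I may assume each $\ga_i$ occupies the middle third of $I_i := [(i-1)/n, i/n]$ while $\ga \equiv p$ on the outer two thirds; this is possible since each $\ga_i$ begins and ends at $p$. For each $i$, let $\be_i(s) := H_i(0,s) = H_i(1,s)$ denote the trajectory of the basepoint during the given contraction $H_i$: a curve from $p$ to the contraction point $q_i := H_i(\cdot, 1)$ of length at most $W_i$.

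I build $H$ in $n$ time-phases, where Phase $i$ (occupying $s \in [(i-1)/n, i/n]$) modifies only the portion of the curve on $I_i$, leaving the other $I_j$'s at their current values (still $\ga_j$ if $j > i$, constant $p$ if $j < i$). Phase $i$ has two halves. During the first half, parameter $\tau \in [0,1]$, I replace the portion on $I_i$ by the augmented loop $\be_i|_{[0,\tau]} \cdot H_i(\cdot, \tau) \cdot \be_i|_{[0,\tau]}^{-1}$ (based at $p$), parameterized so that the left stem occupies the left third of $I_i$, the middle piece $H_i(\cdot, \tau)$ occupies the middle third, and the right stem occupies the right third. At $\tau = 1$ this portion equals the backtrack loop $\be_i \cdot q_i \cdot \be_i^{-1}$. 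In the second half, I contract this backtrack loop to $p$ by the standard null-homotopy that symmetrically shrinks both copies of $\be_i$ toward $p$, which has width at most $W_i$.

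For any $t_0 \in I_i$, the trajectory $H(t_0, \cdot)$ is stationary outside Phase $i$. If $t_0$ lies in the middle third of $I_i$, its first-half trajectory is the vertical fibre $H_i(u_0, \cdot)$ where $u_0$ is the rescaled parameter, of length $\le W_i$ and ending at $q_i$; its second-half trajectory is $\be_i^{-1}$ from $q_i$ to $p$, of length $\le W_i$. If $t_0$ lies in an outer third (where $\ga \equiv p$), its first-half trajectory traces a partial $\be_i$ from $p$ to $\be_i(r)$ for some $r \le 1$, and its second-half trajectory retraces this back to $p$; each has length $\le W_i$. In every case the total length is at most $2 W_i$, so $w_H \le 2 \max_i W_i$.

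The main technical point is verifying joint continuity of $H$. At each seam between the three thirds of an $I_i$, both adjacent pieces evaluate to $\be_i(\tau)$ at the given $\tau$ (using $H_i(0,\tau) = H_i(1,\tau) = \be_i(\tau)$), giving continuity in $t$. The endpoints of each $I_i$ stay at $p$ throughout Phase $i$, which matches the adjacent intervals whose values are either constant $p$ (from a previous phase) or pass through $p$ at $t = (i-1)/n$ or $i/n$. Finally, the transition between the two halves of Phase $i$ is continuous since both pieces agree on the backtrack loop $\be_i \cdot q_i \cdot \be_i^{-1}$ at the join.
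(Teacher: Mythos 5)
The paper itself does not supply a proof of Lemma~\ref{lm_wsep} (it defers to \cite{r_zzf}), so I can only assess the argument on its own merits and against the related construction in Step~(4) of the proof of Theorem~\ref{thmW}. Your overall strategy --- contract the $\ga_i$ in disjoint time--phases, conjugating each contraction $H_i$ by the basepoint trajectory $\be_i$ so that the basepoint stays at $p$, then collapse the resulting backtrack $\be_i\cdot q_i\cdot\be_i^{-1}$ --- is the standard and correct one, and the width accounting for each trajectory (vertical fibre of $H_i$ of length $\leq W_i$ followed by a sub--arc of $\be_i^{\pm 1}$ of length $\leq W_i$, giving $2W_i$) is right.

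There is, however, a real gap in the ``without loss of generality'' reparametrization step. To have the outer two thirds of each $I_i$ constant at $p$ you replace $\ga$ by $\ga' := \ga\circ\phi$, where $\phi$ is a \emph{weakly} increasing surjection collapsing those outer thirds onto the division points between the $\ga_i$'s. Your claim that this ``leaves $w_H$ invariant'' only holds in one direction: a null--homotopy of $\ga$ pushes forward to one of $\ga\circ\phi$ with no larger width, but to transfer your $H'$ for $\ga'$ back to the original $\ga$ you would need a \emph{continuous} section $\psi$ of $\phi$ with $\ga = \ga'\circ\psi$, and when $n\geq 2$ no such section exists: $\psi$ must jump across the whole interval where $\phi$ is constant at each junction $a_i$ between $J_i$ and $J_{i+1}$. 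Composing $H'(\psi(\cdot),\cdot)$ is then discontinuous precisely during Phase $i$ and Phase $i+1$, because the stem on the right third of $I_i$ (resp.\ left third of $I_{i+1}$) is not constant in $t$ for $s$ in those phases. (For $n=1$ the map $\psi$ is just the affine inclusion onto the middle third and the transfer does work, so the issue is genuinely a multi--piece phenomenon.) The alternative of interpolating the reparametrization as an initial ``Phase~0'' homotopy $(t,s)\mapsto\ga\bigl((1-s)t + s\phi(t)\bigr)$ costs width on the order of $\max_i\length(\ga_i)$, which is not controlled by $\max_i W_i$; and attempting to grow the stems gradually from zero length to avoid any reparametrization makes the trajectories $\tau\mapsto H_i\bigl((u-\tau/3)/(1-2\tau/3),\tau\bigr)$ genuinely diagonal in the $(t,s)$--square, and the width hypothesis on $H_i$ gives no control over the $H_i$--image of such a diagonal.

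In the context where this lemma is actually applied (curves coming out of the Birkhoff process and Lemma~\ref{lm_brk}, all of length $\leq 3D$), the extra cost $\max_i\length(\ga_i)\leq 3D$ of the reparametrization phase is harmless, and indeed the paper's own Step~(4) tacitly uses the same middle--third parametrization without comment. But as a proof of the lemma \emph{as stated} (which places no bound on $\length(\ga_i)$ and asks for width $\leq 2\max_i W_i$ for the given parametrization of $\ga$), the reparametrization step needs an honest justification or the lemma needs to be restated up to reparametrization.
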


\begin{remark}
    The statement of the above lemmas does not depend on the number of the elements $N$ in the good cover.
    \end{remark}
Our main theorem states that if a Riemannian manifold admits a good cover in the above sense, then the length of a shortest closed geodesic, can be bounded by some function depending on $F,G$ and $N$. In fact, there are several classes of manifolds that admit such type of cover.

\subsection{Manifolds with injectivity radius lower bound}
If we consider manifolds with injectivity radius bounded from below by some constant $r_0$, for example, manifolds with sectional curvature $\leq k$, $\vol \leq V$, then in this case, we may construct a good cover with $F=r_0$ and $G=G(r_0(V,D,k))$. In this case, it is showed by R.~Rotman in \cite{rotman2000upper}, that if $M$ has a non-trivial second homology group, then the length of a shortest closed geodesic is bounded above. Our Theorem~\ref{thm_main} does not improve this result. It is worth mentioning that the assumption on the second homology group can be removed by A.~Nabutovsky and R.~Rotman in \cite{nabutovsky2013length}.

\subsection{Non-collapsed manifolds with Ricci curvature bounds}\label{sec21}

In \cite{r_zzf}, we have constructed a good cover for four-dimensional manifolds whose diameter $\leq D$, volume $\geq v$ and $|Ric|<3$. Let us briefly describe the process here. In \cite{cheeger2014regularity}, Cheeger and Naber showed that for any non-collapsed manifold of dimension 4, with bounded
Ricci curvature, there is a structural theorem called bubble-tree decomposition.

\begin{theorem}[\cite{cheeger2014regularity}, Theorem 8.64]\label{thm3_f}
Let $M$ be a 4-dimensional Riemannian manifold with $|Ric|\leq 3$, $\vol(M)>v>0$ and $\diam(M)\leq D$. Then $M$ admits a decomposition into bodies and necks
$$M=\Bb^1 \cup \bigcup_{j_2=1}^{N_2} \Nn_{j_2}^2\cup \bigcup_{j_2=1}^{N_2} \Bb_{j_2}^2\cup\dots\cup \bigcup_{j_k=1}^{N_k} \Nn_{j_k}^k\cup \bigcup_{j_k=1}^{N_k} \Bb_{j_k}^k,$$

such that the following conditions are satisfied:
\begin{enumerate}
\item If $x\in \Bb_i^j$, then $r_h(x)\geq r_0(v,D)\cdot \diam (\Bb_i^j)$, where $r_h$ is the harmonic radius and $r_0$ is a constant that only depends on $v$ and $D$.
\item Each $\Nn_i^j$ is diffeomorphic to $\R\z S^3/\Ga_i^j$ for some $\Ga_i^j\su O(4)$ with the order $|\Ga_i^j|<c(v,D)$.
\item $\Nn_i^j\cap \Bb_i^j$ is diffeomorphic to $\R\z S^3/\Ga_i^j$.
\item $\Nn_i^j\cap \Bb_{i'}^{j-1}$ is either empty or diffeomorphic to $\R\z S^3/\Ga_i^j$.
\item Each $N_i\leq n(v,D)$ and $k\leq k(v,D)$.
\end{enumerate}
\end{theorem}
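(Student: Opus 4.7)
The plan is to follow the strategy of Cheeger-Naber, which proceeds by an iterated rescaling argument at the singular set, governed by an $\ep$-regularity theorem. The first step is to establish $\ep$-regularity of the form: there exists $\ep_0=\ep_0(v,D)$ such that if $r^{-2}\int_{B_r(x)}|\mathrm{Rm}|^2\,dV<\ep_0$ at some scale $r$, then the harmonic radius satisfies $r_h(x)\geq c\cdot r$ for a definite constant $c$. Under the hypotheses $|\mathrm{Ric}|\leq 3$, $\vol(M)\geq v$, and $\diam(M)\leq D$, this can be obtained by combining the Cheeger-Colding almost-rigidity theorems with a standard compactness/contradiction argument using the volume non-collapsing assumption.

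Given $\ep$-regularity, I would define the initial body $\Bb^1$ as the maximal region where the scale-invariant curvature integral is small at the ambient scale of order $D$. Its complement consists of finitely many \emph{curvature concentration points}; finiteness follows from a global $L^2$ bound on $|\mathrm{Rm}|$ that is controlled by $V,D$, and the Euler characteristic via Chern-Gauss-Bonnet (the Euler characteristic itself being bounded in this class by Bishop-Gromov volume comparison). At each such point, the tangent cone has the form $\R^4/\Ga$ with $\Ga\su O(4)$ finite, and $|\Ga|$ is bounded in terms of $v$ by Bishop-Gromov volume monotonicity applied to the cone. The inductive step then rescales at each concentration point by the first scale at which curvature reconcentrates and passes to a pointed Gromov-Hausdorff limit, producing a complete non-flat Ricci-flat ALE orbifold. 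The neck $\Nn_i^j$ is the annular transition region between the body at the previous level and the rescaled body at the next level, and it inherits the diffeomorphism type $\R\z S^3/\Ga_i^j$ from the asymptotic geometry of the ALE limit. Iterating yields the tree, and termination at depth $k\leq k(v,D)$ follows because each new level consumes a definite quantum of curvature from a fixed total budget.

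The main obstacle will be making the rescaling fully quantitative and ensuring that successive levels patch consistently. One must choose transition scales carefully, for instance via a maximal-function/stopping-time argument applied to $|\mathrm{Rm}|^2$, so that consecutive necks glue into bodies satisfying the harmonic-radius lower bound in (1) and so that the diffeomorphism types in (3) and (4) genuinely match across levels (in particular with the same group $\Ga_i^j$ on either side of a neck). This is precisely where the quantitative codimension-$4$ regularity of Cheeger-Naber and their stratification of the singular set are indispensable; without them one would only obtain a qualitative, non-uniform version of the decomposition, with no effective bound on the number of bodies, necks, or the depth $k$.
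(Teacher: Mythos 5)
The paper does not prove this theorem: it is quoted from Cheeger and Naber \cite{cheeger2014regularity} (their Theorem~8.64) and used as a black box to build the good cover in Lemma~\ref{lm_gc4}, so there is no internal argument against which your sketch can be measured. Your outline does track the actual bubble-tree strategy of Cheeger--Naber, and most of the ingredients you name are the right ones: an $\ep$-regularity theorem controlling the harmonic radius, blow-up at curvature-concentration points producing Ricci-flat ALE orbifold limits (Ricci-flat because rescaling kills the bounded Ricci term), cylindrical necks $\R\times S^3/\Ga$ as the annular transition regions, $|\Ga|$ bounded via volume monotonicity on the cone, and termination of the tree from a gap theorem that forces each new level to absorb a definite quantum of $L^2$ curvature.

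Two specific points are wrong, however. First, in dimension four the quantity $\int_{B_r}|\mathrm{Rm}|^2\,dV$ is already scale-invariant; the extra factor $r^{-2}$ you insert makes the hypothesis scale like $r^{-2}$, so that smallness would hold automatically at large $r$ and never at small $r$, and the $\ep$-regularity statement as you wrote it is false. Second, and more substantively, the a~priori curvature bound $\int_M|\mathrm{Rm}|^2\leq C(v,D)$ is \emph{not} obtained from Chern--Gauss--Bonnet together with a Bishop--Gromov bound on $\chi(M)$: Bishop--Gromov volume comparison under a two-sided Ricci bound does not control the Euler characteristic. In Cheeger--Naber this $L^2$ bound is itself one of the central theorems, proved locally and without global topological input via the codimension-four regularity and neck-region analysis, and the bound on $\chi(M)$ (indeed finiteness of diffeomorphism types in this class) is a \emph{consequence} of that structure theory rather than an input to it. Presenting the $L^2$ bound as an elementary preliminary inverts the true logical order and hides exactly where the difficulty lies.
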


It is constructed in \cite{r_zzf} that, in this case, the manifold $M$ admits a good cover in the sense of Definition~\ref{def_gc}. Namely,
\begin{lemma}\label{lm_gc4}
Let $M$ be a 4-dimensional Riemannian manifold which satisfies the conditions of Theorem~\ref{thm3_f}.
Then there is an open cover $\Oo$ of the manifold and its refinement $\ti{\Oo}$ in the sense of Definition~\ref{def_gc} and Remark~\ref{rm_fup} with $F=3D$ and $G=21D$. In other words,
\begin{itemize}
  \item Any $x,y$ in $A\in \ti{\Oo}$ can be connected by a curve with length less than $3D$.
  \item For any closed curve $\ga:[0,1]\ra B\in \Oo$ with $\ga(0)=\ga(1)=p$, there is a contraction $H:[0,1]\z [0,1]\ra B$
  with $H(t,0)=\ga(t)$ and $H(t,1)=p$ such that $w_H\leq 21D$.
\end{itemize}
Moreover, the total number of elements in $\ti{\Oo}$ is bounded by some $N(V,D)$ which can be computed by the constants in Theorem~\ref{thm3_f}.
\end{lemma}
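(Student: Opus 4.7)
The plan is to use the bubble-tree decomposition of Theorem~\ref{thm3_f} to construct the refinement $\ti{\Oo}$ and the cover $\Oo$ piece by piece. Since the number of bodies and necks at each level, and the recursion depth $k$, are all bounded by functions of $v$ and $D$, it suffices to produce, for each body $\Bb_i^j$ and each neck $\Nn_i^j$, a finite family of elements of $\ti{\Oo}$ and $\Oo$ whose cardinality is controlled by the constants in Theorem~\ref{thm3_f}. The total cover will then have size bounded by some $N(V,D)$.

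First I would handle the bodies. Condition (1) of Theorem~\ref{thm3_f} gives a lower bound on the harmonic radius proportional to $\diam(\Bb_i^j)$, so on each body the metric is $C^{1,\alpha}$-close to the Euclidean one at that scale. I would cover $\Bb_i^j$ by harmonic balls of radius $\approx r_0\x\diam(\Bb_i^j)/4$ forming the local family of $\ti{\Oo}$ and by balls of radius $\approx r_0\x\diam(\Bb_i^j)/2$ for the corresponding elements of $\Oo$. A volume comparison using the quasi-Euclidean metric bounds the number of such balls per body by a constant depending on $v$ and $D$. Inside such a harmonic ball, any two points are joined by an almost-straight arc of length at most $\diam(\Bb_i^j)\leq D$, and any closed curve admits a straight-line contraction whose width is comparable to the radius. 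Both $F\leq 3D$ and $G\leq 21D$ therefore hold with a large margin inside bodies.

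For a neck $\Nn_i^j\cong \R\z S^3/\Ga_i^j$, I would parameterize by the $\R$-coordinate and slice it into finitely many overlapping cylindrical slabs $[a_s,b_s]\z S^3/\Ga_i^j$; the refinement uses short slabs and $\Oo$ uses slightly longer ones. Because $|\Ga_i^j|<c(v,D)$ and the total extrinsic length of the neck is bounded by $D$, the number of slabs per neck is bounded by a function of $v$ and $D$. Two points in a slab are joined by a path going along the $\R$-direction and across a fiber of uniformly bounded diameter, yielding $F_A\leq 3D$. The delicate condition is the width bound $G\leq 21D$ for a contraction inside the corresponding element of $\Oo$: loops that are trivial in $\pi_1(S^3/\Ga_i^j)$ can be contracted inside the slab, but non-trivial classes in $\Ga_i^j$ must be killed by enlarging the neck slabs at their endpoints to include a portion of the adjacent body $\Bb_{i'}^{j\pm 1}$ in which the generators of $\Ga_i^j$ become null-homotopic. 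In a quasi-Euclidean harmonic ball, such a null-homotopy has width comparable to the body diameter $\leq D$, so after composing with travel along the $\R$-direction and through one fiber, the total width is bounded by $21D$.

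The main obstacle is precisely this last step, namely controlling the width of contractions of non-trivial $\Ga_i^j$-classes inside neck-containing $B_k\in \Oo$. One uses that $M$ is simply-connected, so the image of $\pi_1(S^3/\Ga_i^j)$ in $\pi_1(M)$ is trivial, together with the bound $|\Ga_i^j|\leq c(v,D)$ to control the number and length of the generating null-homotopies, and the harmonic-radius estimate on the adjacent body to carry them out quasi-Euclideanly. Once this is in place, verifying the remaining axioms of Definition~\ref{def_gc}, i.e.\ the nesting $A_i\cup A_j\su B_k$ and the bounds on $F_A$ and $F_B$, is routine from the explicit construction of the slabs and harmonic balls.
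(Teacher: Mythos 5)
Your treatment of the body regions is essentially the same as the paper's: the paper also uses a Vitali-type selection of harmonic balls (pairwise disjoint $B_{R/16}$, covering by $B_{R/4}$ for $\ti\Oo$ and $B_{R}$ for $\Oo$, with $R$ a fixed fraction of $r_h$), and the width bound inside each ball comes from the quasi-Euclidean structure. That part is fine.

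Where your proposal diverges — and where it has a genuine gap — is the neck regions. You cover $\Nn_i^j \cong \R\times S^3/\Ga_i^j$ by full cylindrical slabs $[a_s,b_s]\times S^3/\Ga_i^j$, which retract onto $S^3/\Ga_i^j$ and hence have nontrivial $\pi_1$ when $\Ga_i^j\neq 1$. Definition~\ref{def_gc} requires every closed curve in each $B\in\Oo$ to contract inside $B$ with width $\leq G$, so a $B$ carrying nontrivial $\pi_1$ is inadmissible from the start, and you are forced to enlarge the slabs. Your fix — absorbing a portion of the adjacent body and contracting the $\Ga$-generators "quasi-Euclideanly in a harmonic ball" — does not work: the generators of $\Ga_i^j$ live near the interface, where the local topology is a cone over $S^3/\Ga_i^j$, not a Euclidean ball, and the null-homotopy of such a loop is a global phenomenon that may have to traverse the entire inner bubble $\Bb_i^j$ (which can itself contain further necks with their own $\Ga$'s). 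There is no a priori width control on such a contraction, and simple-connectedness of $M$ only tells you the contraction exists somewhere, not that it fits in a set of bounded width. You also lose the nesting axiom $A_i\cup A_j\subset B_k$ once the slab-plus-body sets become large.

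The paper sidesteps this entirely. Instead of slabs, the neck is covered by the images of finitely many \emph{convex} regions $\ti K_{i,k}^j\subset \R\times S^3/\Ga_i^j$ (and their refinements $K_{i,k}^j$). Convex sets are contractible, so the $\pi_1(\Ga_i^j)$ obstruction never arises inside any cover element, and the width estimate reduces to straight-line homotopies inside a convex set. The price is that the number of such convex pieces needed to cover $S^3/\Ga_i^j$ grows with $|\Ga_i^j|$, which is exactly where the bound $|\Ga_i^j|<c(v,D)$ from Theorem~\ref{thm3_f} is used to keep $N$ controlled. You should replace the slab decomposition with a decomposition into small convex (hence simply connected) pieces; once you do that, the $\pi_1$ issue you flagged as "the main obstacle" disappears rather than needing to be overcome.
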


The construction of this cover can be found in Section 2 of \cite{r_zzf}. Here we omit the detailed proof but instead we describe the idea. By Theorem~\ref{thm3_f}, $M$ admits a cover with the body regions $\Bb_i^j$ and the neck regions $\Nn_i^j$.

In each body $\Bb_i^j$, for any $x\in \Bb_i^j$, the harmonic radius $r_h(x)\geq r_0\cdot \diam(\Bb_i^j)$ is in proportional to the diameter of the body. Let $R(x)=\frac{r_h(x)}{8\x (1+\ep)}$. It is show in \cite[Lemma 2.2]{} that every closed curve in $B_R(x)$ can be contracted within $B_R(x)$ through a homotopy of width $\leq D$. We may pick a cover $\cup_k B_{R(x_k)/4}(x_k) \supseteq \Bb_i^j$ such that $B_{R(x_k)/16}(x_k)$ are pairwise disjoint. This guarantees the number of the elements in this collection is bounded by some $N_{\Bb_i^j}(V,D)$. Note that the collection $\{B_{R(x_k)}\}$ still form a cover of $\Bb_i^j$. Therefore, we simply choose $B_{R(x_k)/4}(x_k)$ to be an element in $\ti{\Oo}$ and $B_{R(x_k)}(x_k)$ an element in $\Oo$.

The neck regions in general do not admit a lower bound on harmonic radius. However, the geometry of $\Nn_i^j$ is completely understood. In this case, we cover the neck by the image $\{\ti{T}_{i,k}^j\}$ and $\{T_{i,k}^j\}$ of finitely many convex regions $\{\ti{K}_{i,k}^j\}$, and their refinements $\{K_{i,k}^j\}$, respectively.  Here $k \leq N_{\Nn_i^j}(V,D)$; $\ti{K}_{i,k}^j\su \R\z S^3/\Ga_i^j$ is convex; and $N_{\Nn_i^j}(V,D)$ only depends on the order of $|\Ga_i^j|<c(V,D)$. It is then showed in \cite[Lemma 2.8]{} that the cover satisfies Definition~\ref{def_gc} with $F=3D$ and $G=21D$.

\begin{remark}
If the neck region is empty, e.g., the manifold admits an injectivity radius lower bound which only depends on $V$ and $D$, then one can pick the cover in the same way as the body region, with $G=D$ or finer estimates depending on the injectivity radius.
\end{remark}

\section{Proof of the main Theorems}
In this section, we will prove our main Theorem~\ref{thm_main} and Theorem~\ref{thmW}. The idea of the proof is based on our paper \cite{r_zzf}. We first proof Theorem~\ref{thmW}.

\begin{proof}[Proof of Theorem~\ref{thmW}]
 Let $\ga:[0,1]\ra M$ be a closed curve with $\ga(0)=\ga(1)=p$ in $M$. By Lemma~\ref{lm_simp}, there is a loop $\al:[0,1]_{\triangle}\ra \Sg$ such that $\ga$ is homotopic to $\al$ through a homotopy $H_1$ with $w_{H_1}\leq G_\Sg(V,D)$.

 Without lost of generality, we may assume that the simplicial length $m(\al)$ does not exceed $X(V,D)$. Otherwise, by Lemma~\ref{lm_brk}, the curve $\ga$ is homotopic to $\ga_1\cup \dots \cup \ga_k$, such that if $\al_i$ is the approximation of $\ga_i$, then $m(\al_i)\leq X(V,D)$. Furthermore, the length of each $\length(\ga_i)\leq \length(\ga)-\de(V,D)$, for some $\de>0$. Then, by Lemma~\ref{lm_wsep}, if each $\ga_i$ can be contracted through a homotopy of width $\leq W$, then $\ga$ can be contracted through a homotopy of width $\leq 2W$. Therefore, it is suffices to contract each $\ga_i$.

 Below we will show that $\ga$ can be contracted to a point through a homotopy of width bounded by some $B(V,D,N)$.
\begin{enumerate}
  \item[(0)] Birkhoff curve shortening process for free loops (BPFL).

  Note that because the length of $\ga\leq 3D$, under the assumption that there is no closed geodesic whose length is $\leq 3D$, if we apply the Birkhoff curve shortening process for free loops (BPFL) to $\ga$, then there is a length non-increasing contraction of $\ga$.

  Let us denote this contraction by $H:[0,1]\z[0,1]\ra M$, where $H(t,0)=\ga(t)$ and $H(t,1)=$ point. We consider a partition of the second interval $[0,1]=s_0<s_1<\dots<s_l=1$ such that $w_H(s_j,s_{j+1})\leq D$. And let $\ga^{j}(t):=H(t,s_j)$, for every $0\leq j\leq l$.

  \item[(1)] First step.

   By Lemma~\ref{lm_simp}, each $\ga^{j}$ is homotopic to some simplicial $\al^j\su \Sg$. In particular, by assumption, for the first curve $\ga^0:=\ga$, we have $m(\al^0)\leq X(V,D)$.

   Let $\ga^{j_0}$ be the first $j_0\geq 1$ such that $m(\al^{j_0})> X(V,D)$.
   Then $\ga$ is homotopic to $\ga^{j_0}$ through curves $\ga^1, \ga^2,\dots, \ga^{j_0-1}$. The width of homotopy bewteen each $\ga^j$ and $\ga^{j+1}$ is less than or equal to $D$, and the simplicial approximation of each $\al^j$ has simplicial length $m(\al^j)\leq X(V,D)$, for $j=0,1,\dots, j_0-1$.

    We now apply Lemma~\ref{lm_brk} to $\ga^{j_0}$. In this case, $\ga^{j_0}$ is homotopic to some $\ga^{j_0}_{1}\cup \dots \cup \ga^{j_0}_{k}$. Let $\al^{j_0}_{i}$ be the simplicial approximation of $\ga^{j_0}_{i}$. By Lemma~\ref{lm_brk}, the length of each $\ga^{j_0}_{i}$ satisfies $$\length (\ga^{j_0}_{i})\leq \length (\ga) - \de.$$
    And the simplicial length $m(\al^{j_0}_{i})\leq X(V,D)$.

  \item[(2)] A generic step.

  Now we repeat this construction to the curves $\ga^{j_0}_{i}$, which will generate a finite family of curves. (We describe in (3) the parametrization of this family.) Denote by $\mb{i}_l$ the indices $i_1,i_2,\dots, i_l$. During this process, a generic curve
$\ga_{\mb{i}_l}:=\ga_{\mb{i}_l}^0$ is homotopic to some $\ga_{\mb{i}_l}^{j_0}$ through curves $\ga_{\mb{i}_l}^1, \dots,\ga_{\mb{i}_l}^{j_0-1}$ constructed using BPFL, such that if $\al_{\mb{i}_l}^j$ is the approximation of $\ga_{\mb{i}_l}^j$ as in Lemma~\ref{lm_simp}, then $m(\al_{\mb{i}_l}^j)\leq X(V,D)$, for $j=0,1,\dots,j_0-1$ and $m(\al_{\mb{i}_l}^{j_0})> X(V,D)$. The width of homotopy between $\ga_{\mb{i}_l}^j$ and $\ga_{\mb{i}_l}^{j+1}$ is bounded by $D$.

And by Lemma~\ref{lm_brk}, the curve $\ga_{\mb{i}_l}^{j_0}$ is homotopic to $\ga_{\mb{i}_l,1}\cup\dots \cup \ga_{\mb{i}_l,k}$ with the simplicial length of the approximation $\al_{\mb{i}_l,j}$ of each $\ga_{\mb{i}_l,j}$ is bounded by $X(V,D)$.

This process will terminate if no such $j_0$ exists, i.e., $\ga_{\mb{i}_l}$ can be contracted to a point through a family of curves $\ga_{\mb{i}_l}^1, \dots,\ga_{\mb{i}_l}^{j_0-1}=\text{point}$, with simplicial length of the approximation $\leq X(V,D)$.

On the other hand, each time we apply Lemma~\ref{lm_brk},
\begin{equation*}
  \length(\ga_{\mb{i}_l,j})\leq \length(\ga_{\mb{i}_l})-\de \leq \length(\ga)-(l+1)\cdot\de
\end{equation*}

  Note that the above inequality also implies that
  $$l\leq \frac{\length(\ga)}{\de}-1,$$
  where the right-hand side is finite (but may not be bounded by any functions of $V$ and $D$). Hence for each ``branch'' of the curve $\ga_i$, we may apply Lemma~\ref{lm_brk} for at most $\frac{\length(\ga)}{\de}$ times before we get a point during the contraction.

  \item[(3)] Parametrization of the family.

  In previous steps, we generate a finite family of curves denoted as $\{\ga_I\}_{I\in \Vv(\Tt)}$. This family is parameterized by a finite tree $\Tt=\{v_i,e_{ij}\}$, where $\Vv(\Tt)=\{v_i\}$ is the set of vertices and $\Ee(\Tt)=\{e_{ij}\}$ the set of edges. We denote by $h(\Tt)$ the height of a finite tree $\Tt$. The tree $\Tt$ is constructed as follows.
\begin{enumerate}
  \item The root $v_0$ of the tree is defined to be the curve $\ga_i$, so that the simplicial length of the approximation $m(\al_i)\leq X(V,D)$.
  \item Each curve $\ga_{\mb{i}_l}^j$, including the point curves, occurred during the generic step (2) whose approximation has simplicial length bounded by $X(V,D)$ is identified with a vertex $v_t\in \Vv$.

      Since the total number of the curve is finite, we can choose the indices such that if $\ga_{\mb{i}_l}^{j}$ is a subsequent curve of $\ga_{\mb{i}_l}^{k}$ during the BPFL, then the corresponding vertices $v_{t_j}$ and $v_{t_k}$ satisfy $t_j>t_k$.

      In addition, suppose that a curve $\ga_{\mb{i}_l}^{j_0}$ is homotopic to $\ga_{\mb{i}_l,1}\cup\dots \cup \ga_{\mb{i}_l,k}$. Note that the curve $\ga_{\mb{i}_l}^{j_0}$ is not identified with a vertex. In this case, we require that if the curves $\ga_{\mb{i}_l,i}$, where $1\leq i\leq k$, and $\ga_{\mb{i}_l}^{j_0-1}$ are identified with the vertices $v_{s_1},\dots,v_{s_k}$ and $v_{t_{j_0-1}}$, then $s_i>t_{j_0-1}$.
  \item Suppose that $\ga_{\mb{i}_l}^{j+1}$ is a subsequent curve of $\ga_{\mb{i}_l}^{j}$ during BPFL process and they are identified with vertices $v_{t_j}$, $v_{t_{j+1}}$, respectively. We define an edge $e_{t_j,t_{j+1}}\in \Ee$ joining these two vertices.

      Similarly, if $\ga_{\mb{i}_l}^{j_0}$ is homotopic to $\ga_{\mb{i}_l,1}\cup\dots \cup \ga_{\mb{i}_l,k}$, and the curves $\ga_{\mb{i}_l,1},\dots,\ga_{\mb{i}_l,k}$ and $\ga_{\mb{i}_l}^{j_0-1}$ are identified with the vertices $v_{s_1},\dots,v_{s_k}$ and $v_{t_{j_0-1}}$, respectively. Then we define an edge $e_{t_{j_0-1},s_i}\in \Ee$ joining $v_{t_{j_0-1}}$ with each $v_{s_i}$, for $1\leq i\leq k$.
\end{enumerate}
As a consequence of the construction, the family of curves $\{\gamma_I\}_{I\in \Vv}$ satisfies the following property.
\begin{lemma}\label{lm_tree}
Let $\{\ga_I\}_{I\in \Vv(\Tt)}$ be a finite family of the curves constructed from above. Then the simplicial length of the approximation of each $\ga_I$ is bounded by $X(V,D)$, and in particular, we have the following.
  \begin{enumerate}
    \item If a vertex $v_j\in \Vv$ is a leaf, i.e., it has no children, then the corresponding curve $\ga_{v_j}$ is a point curve in $M$.
    \item If a vertex $v_j$ has a single child $v_s$, then the corresponding curve $\ga_{v_j}$ is homotopic to $\ga_{v_s}$ through a homotopy of width bounded by $D$.
    \item If a vertex $v_j$ has $k$ children $v_{s_1},\dots, v_{s_k}$, then the corresponding curve $\ga_{v_j}$ is homotopic to $\ga_{v_{s_1}}\cup\dots \cup \ga_{v_{s_k}}$ through a homotopy of width bouned by $D+W(V,D)$, where $W(V,D)$ is the same function as in Lemma~\ref{lm_brk}.
  \end{enumerate}
\end{lemma}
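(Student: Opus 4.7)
The plan is to prove Lemma~\ref{lm_tree} by directly reading off each clause from the tree construction in step (3), since the lemma is essentially a bookkeeping statement about the family $\{\ga_I\}_{I\in\Vv(\Tt)}$ rather than a new geometric estimate. The uniform bound $m(\al_I)\leq X(V,D)$ is built into the definition of $\Vv(\Tt)$: only intermediate curves whose simplicial approximation already satisfies this bound are promoted to vertices, and the curves $\ga_{\mb{i}_l}^{j_0}$ violating the bound are deliberately omitted from $\Vv$, serving only as transitional objects between $\ga_{\mb{i}_l}^{j_0-1}$ and the pieces produced by Lemma~\ref{lm_brk}.

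Next, I would handle the three clauses separately. For part (1), a leaf $v_j$ has no child, so by the termination rule of step (2) the curve $\ga_{v_j}$ must be the last element of a BPFL family which contracts to a point without ever producing a new $j_0$; hence $\ga_{v_j}$ is a point curve. For part (2), a single child $v_s$ corresponds to a BPFL successor $\ga_{v_j}^{j+1}$ of $\ga_{v_j}^j$, both satisfying $m(\al^{\cdot})\leq X(V,D)$; the width bound $\leq D$ then follows immediately from the choice of partition $s_0<s_1<\dots<s_l$ in step (0), which was made precisely so that each sub-homotopy $H|_{[s_j,s_{j+1}]}$ has width $\leq D$.

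For part (3), a vertex $v_j=v_{t_{j_0-1}}$ with $k\geq 2$ children $v_{s_1},\dots,v_{s_k}$ arises exactly when the next BPFL step produces $\ga_{\mb{i}_l}^{j_0}$ with simplicial length exceeding $X(V,D)$, forcing an application of Lemma~\ref{lm_brk} that decomposes $\ga_{\mb{i}_l}^{j_0}$ into $\ga_{\mb{i}_l,1}\cup\dots\cup\ga_{\mb{i}_l,k}$. The homotopy from $\ga_{v_j}$ to the concatenation of its children's curves is then the composition of two pieces: the BPFL step from $\ga^{j_0-1}$ to $\ga^{j_0}$, of width $\leq D$; and the break step from $\ga^{j_0}$ to $\ga_{\mb{i}_l,1}\cup\dots\cup\ga_{\mb{i}_l,k}$, of width $\leq W(V,D)$ by Lemma~\ref{lm_brk}. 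Concatenating these yields total width $\leq D+W(V,D)$, as claimed.

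Since no geometric estimate beyond Lemmas~\ref{lm_simp} and~\ref{lm_brk} together with the width-$\leq D$ partition of BPFL is required, the only real subtlety is verifying that the indexing conventions imposed in step (3) make the parent/child correspondence unambiguous. I expect this to be the main obstacle: one must check that the constraints $t_j>t_k$ for BPFL successors and $s_i>t_{j_0-1}$ for break descendants are simultaneously realizable on a finite linear ordering, and that they determine $\Tt$ uniquely up to relabeling. This is routine combinatorial bookkeeping and does not affect the three width estimates, which come directly from the two geometric operations (BPFL and Lemma~\ref{lm_brk}) out of which every parent/child edge is constructed.
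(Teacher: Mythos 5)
Your proposal is correct and takes essentially the same approach as the paper, which states Lemma~\ref{lm_tree} as a ``consequence of the construction'' in step (3) without supplying a separate proof. You have correctly spelled out the bookkeeping the paper leaves implicit: leaves are point curves by the termination rule of the BPFL process, single-child edges inherit the width-$\leq D$ bound from the partition chosen in step (0), and multi-child edges compose one width-$\leq D$ BPFL step with one width-$\leq W(V,D)$ application of Lemma~\ref{lm_brk}, the trajectory lengths adding to give $D+W(V,D)$.
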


\item[(4)] Homotopy with marked points.

Given a closed curve $\ga$ and its associated family of curves $\{\ga_I\}_{I\in \Vv(\Tt)}$ as constructed above, we now develop a new homotopy based on this family which contracts $\ga$ to a marked point $p\in \ga$. This construction can be viewed as a generalization of Lemma~\ref{lm_wsep}. The construction will be done inductively as the following.

Without lost of generality, we assume a curve $\ga_i\in \{\ga_I\}_{I\in \Vv(\Tt)}$ corresponds to a vertex $v_i\in \Vv$. And we denote by $p_i$ a marked point in $\ga_i$. We first discuss in two cases according to the cases in Lemma~\ref{lm_tree}. The choice of the marked points on subsequent curves will also be based on this construction.

  \begin{enumerate}
  \item[(a),(b)] If $v_i$ has a single child $v_j$, let $\ga_i,\ga_j$ be the corresponding curves, and $p_i\in \ga_i$ is the marked point. We denote by $H(t,s)$ the homotopy between $\ga_i$ and $\ga_j$, where $H(t,0)=\ga_i(t)$ and $H(t,1)=\ga_j(t)$. By assumption, the width of this homotopy $w_H\leq D$.

      Let $\sg_i:[0,1]\ra M$ be the trajectory of the marked point $p_i$ under $H$. In other words, if $p_i=\ga(t_0)$, then $\sg_i(s)=H(t_0,s)$.  We define the marked point $p_j$ on $\ga_j$ to be the other endpoint of $\sg_i$, i.e., $p_j:=H(t_0,1)$.

      Then $\ga_i$ is homotopic to $\sg_i\cup \ga_j \cup -\sg_i$ through $\sg_i([0,t])\cup H([0,1],t)\cup -\sg_i([0,t])$, $0\leq t\leq 1$. The width of this homotopy is bounded by $\max\{2\length(\sg_i), w_H\}\leq 2D$.

  \item[(c)] Similarly, if $v_i$ has $k$ children $v_{s_1},\dots, v_{s_k}$. Let $H(s,t)$ be the homotopy between the corresponding curves $\ga_i$ and $\ga_{v_{s_1}}\cup\dots \cup \ga_{v_{s_k}}$. Let $p_i\in \ga_i$ be the marked point. By Lemma~\ref{lm_brk}, the curves $\{\ga_{v_{s_1}},\dots ,\ga_{v_{s_k}}\}$ has a common base point $p_s$. In this case, we choose the marked point $p_{s_j}=p_s$ on each $\ga_{s_j}$. However, $p_s$ may not be the destination of $p_i$ under $H$. Therefore, if $\ga_i(t_0)=p_i$ and $H(t_1,1)=p_s$, for some $t_0,t_1$, w.l.o.g, assume $t_1\geq t_0$,  we first apply a rotation $R(s,t)=H(t+s,0)$ of $\ga_i$, for $s\in[0,s_0]$, so that $R(t_0,0)=p_i$ and $R(t_0,s_0)=H(t_1,0)$. The width $w_R\leq \frac12 \length(\ga)\leq \frac32 D$.

      Let $\sg_i:[0,1+s_0]\ra M$ be the concatenation of the trajectory of $p_i$ under the rotation $R$ and homotopy $H$, where $\sg_i(0)=p_i$ and $\sg_i(1+s_0)=p_s$. Then $\ga_i$ is homotopic to $\sg_i\cup (\cup_{j=1}^k \ga_{s_j}) \cup -\sg_i$, through the curves $\sg([0,s_0+t])\cup  H([0,1],t)\cup -\sg_i([0,s_0+t])$. In this case the length of $\sg_i$ is bounded by $\frac32 D + D +W(V,D)$ and hence the width of this homotopy is bounded by $5D+2W(V,D)$.
    \end{enumerate}

Now, inductively, the root curve $\ga_0$ is homotopic to $\sg_1 \cup \ga_1 \cup -\sg_1$, then $\sg_1 \cup \sg_2 (\cup \ga_2 \dots) \cup -\sg_2 \cup -\sg_1$, $\dots$, and $\cup_{i\in \Vv} (\sg_i \cup -\sg_i)$. The width of this homotopy is bounded by $2D\x h_1+(5D+2W)\x h_2$, where $h_1,h_2$ are the number of steps (a)(b), (c) performed in the above construction, respectively. Note that $h_1+h_2=h(\Tt)$ which is the height of the tree $\Tt$. Finally, one may contract $\cup_{i\in \Vv} (\sg_i \cup -\sg_i)$ to the point $p$. Because each leaf $\sg_i \cup -\sg_i$ can be contracted at the same time, therefore, the width of this contraction is again bounded by $2D\x h_1+(5D+2W)\x h_2$, where $h_1+h_2=h(\Tt)$.

\item[(5)] Bounded height construction.

To complete the proof of our main theorem, we show that if $\ga$ is a closed curve of length $\leq 3D$ and $\{\ga_{v_I}\}_{v_I\in \Vv(\Tt)}$ is the associated family in the above construction, then there is a new family $\{\ga_{v_I}\}_{v_I\in \ti{\Vv}(\ti{\Tt})}$, which still satisfies the conclusion of Lemma~\ref{lm_tree} with the height of $\ti{\Tt}$ bounded in terms of $V,D$.

Indeed, for each $\ga_{v_I}$, where $v_I\in \Vv$, let us denote by $\Ll_I$ the subtree of $\Tt$ whose root is $v_I$. First note the following.

\begin{lemma}\label{lm_rep}
  Let $\ga_1$, $\ga_2$ be two closed curves of length $\leq 3D$. And let $\al_1$, $\al_2$ be their approximation as in Lemma~\ref{lm_simp}. If $\al_1=\al_2$ up to a reparametrization, then $\ga_1$ is homotopic to $\ga_2$ through a homotopy $H$ with $w_H\leq Y(V,D)$.
\end{lemma}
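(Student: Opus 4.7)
The plan is to prove the lemma by chaining three homotopies: one from $\ga_1$ to its simplicial approximation $\al_1$, a middle homotopy realizing the reparametrization between $\al_1$ and $\al_2$ inside $\Sg \hookrightarrow M$, and finally one from $\al_2$ to $\ga_2$. The first and third are provided directly by Lemma~\ref{lm_simp}, each of width at most $G_\Sg(V,D)$, so the heart of the proof is to produce the middle homotopy with width depending only on $V$ and $D$.

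First I would observe that Lemma~\ref{lm_rep} is being applied in the setting of Step~(5), where each approximation satisfies $m(\al_i)\leq X(V,D)$, and where every edge of the nerve $\Sg$ has length at most $2F_A(V,D)$ by construction in Definition~\ref{def_graph}. Consequently, the arclength of each simplicial loop is controlled:
\[
\length(\al_i)\;\leq\;2F_A(V,D)\cdot X(V,D)\;=:\;L_\Sg(V,D),
\]
a quantity depending only on $V$ and $D$.

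Next I would construct the reparametrization homotopy from $\al_1$ to $\al_2$. The hypothesis that they agree up to reparametrization means that, viewed as maps into $\Sg$, they traverse the same cyclic sequence of edges. Any such reparametrization decomposes as (i) a basepoint-preserving, orientation-preserving change of parameter of $[0,1]$, which is realized by a straight-line homotopy of reparametrizations whose trajectories stay inside the common image and contribute width $0$ in the appropriate limiting sense (and in any case at most $L_\Sg(V,D)$); composed with (ii) a cyclic shift $R(t,s)=\al_1(t+s\,t_0)$ moving the basepoint along the common image by arclength $t_0$. Each trajectory $s\mapsto \al_1(t+s\,t_0)$ has length exactly $|t_0|$, and by choosing the shorter of the two rotation directions we may assume $|t_0|\leq L_\Sg(V,D)/2$. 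Hence the middle homotopy has width at most $L_\Sg(V,D)/2$.

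Concatenating the three homotopies gives a homotopy $H$ between $\ga_1$ and $\ga_2$ with
\[
w_H\;\leq\;2G_\Sg(V,D)+\tfrac{1}{2}L_\Sg(V,D)\;=:\;Y(V,D),
\]
depending only on $V$ and $D$, as required. The main technical obstacle is the reparametrization step: one must fix a common basepoint on the shared simplicial image so that the rotation needed to align $\al_1$ with $\al_2$ has arclength at most half the loop length, and verify that an orientation-preserving change of parameter of the domain genuinely produces a homotopy inside $\Sg$ of width controlled by $L_\Sg(V,D)$. Both points are routine once the uniform bound on $\length(\al_i)$ above is in hand.
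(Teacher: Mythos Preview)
Your approach is essentially the paper's: homotope $\ga_1\to\al_1$, then $\al_1\to\al_2$, then $\al_2\to\ga_2$, with the outer two homotopies supplied by Lemma~\ref{lm_simp} at cost $G_\Sg$ each. The paper's proof is terser and simply records $Y(V,D)=2G_\Sg(V,D)$, effectively treating the middle reparametrization as free. You are more explicit: you invoke the ambient hypothesis $m(\al_i)\le X(V,D)$ from Step~(5), bound $\length(\al_i)\le 2F_A\cdot X =: L_\Sg$, and then pay $\le L_\Sg/2$ for a cyclic rotation aligning $\al_1$ with $\al_2$. This extra care is warranted, since ``equal up to reparametrization'' in the pigeonhole count really means equal as cyclic edge sequences, so a nontrivial rotation may be needed; your bound $Y=2G_\Sg+\tfrac12 L_\Sg$ absorbs it and still depends only on $V,D$. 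One small inconsistency: you note the basepoint-preserving part of the reparametrization can contribute up to $L_\Sg$, but then drop it from the final tally; either include it (harmless, still only $V,D$) or observe that for simplicial maps with the same edge sequence and same starting vertex this part moves each point only within a single edge, hence costs at most $2F_A$.
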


\begin{proof}
  The proof is the same as in Lemma~\ref{lm_simp}. Observe that the curve $ \ga([t_k,t_{k+1}])\cup -\sg_{i+1,k} \cup  -\ga_{kl}\cup \sg_{ik}$ can also be contracted to the point $\ga(t_k)$ with in $B_s$, and therefore inducing a homotopy between $\al_2$ and $\ga_2$ with width bounded by $G_\Sg$. Therefore, one may take $Y(V,D)=2G_\Sg (V,D)$. 
\end{proof}


Let us denote by $h(\Ll)$ the height of a subtree $\Ll\su \Tt$. Since $\Sg$ is a finite simplicial complex with $N$ vertices and at most $N^2$ edges, the number $N_0$ of closed curves, up to reparametrization, whose simplicial length is less than or equal to $X(V,D)$ is bounded by $(N^2+1)^X$.

Now if the height $h(\Tt)\geq N_0+1$, then there exists $v_1,v_2\in \Vv$ such that the approximation of $\ga_{v_1}$ and $\ga_{v_2}$ are the same, up to reparametrizaion. And $v_2\in \Vv (\Ll_1)$, i.e. $v_2$ is in the subtree with root $v_1$.

By Lemma~\ref{lm_rep}, $\ga_{v_1}$ and $\ga_{v_2}$ are homotopic with width $\leq Y(V,D)$. In this case, we construct a new tree $\Tt_1$ by replacing $\Ll_1$ with $\Ll_2$. Note that $\Tt_1$ is indeed a tree, i.e., contains no loops, since $v_2\in \Vv (\Ll_1)$.

Denote by $v_0$ the parent of $v_1$. Again, we discuss by cases as in Lemma~\ref{lm_tree}.
\begin{enumerate}
  \item[(a)(b)] If $v_1$ is the single child of $v_0$, then in $\Tt_1$, $v_2$ is the single child of $v_0$ and $\ga_{v_0}$ is homotopic to $\ga_{v_2}$ with width bounded by $D+Y(V,D)$.
  \item[(c)] If $v_0$ has $k$ children $v_{s_1},\dots, v_1, \dots, v_{s_{k-1}}$, then in $\Tt_1$, $\ga_{v_0}$ is homotopic to $\ga_{v_{s_1}}\cup\dots\cup  \ga_{v_2} \cup\dots \cup \ga_{v_{s_{k-1}}}$ through a homotopy of width bounded by $D+Y(V,D)+W(V,D)$.
\end{enumerate}

Observe that even though we reduced the height of a branch of $v_0$, in general, the height $h(\Tt_1)\leq h(\Tt)$, where the equality may happen. However, if the height $h(\Tt_1)\geq N_0+1$, then we may still apply the above construction to replace a subtree until $h(\Tt_k)\leq N_0\leq (N^2+1)^X$.

On the other hand, if in $\Tt_1$, there is some $v_3\in \Ll_{v_2}$ such that $\ga_{v_3}$ and $\ga_{v_2}$ have the same approximation, then in particular, $\ga_{v_1}$ and $\ga_{v_3}$ also have the same approximation. Hence, in this case $v_0$ can be connected to $v_3$ with homotopies of width bounded by $D+Y$ or $D+Y+W$, respectively for the cases (a)(b) and (c). 
\end{enumerate}
And therfore by (4) and (5), we have proved that the total width of contracting the curve $\ga$ to a point is bounded by 
$$B(V,D,N)\leq (N^2+1)^X \cdot (7D+2Y+W).$$
\end{proof}
We may now finish the proof of Theorem~\ref{thm_main}.
\begin{proof}[Proof of Theorem~\ref{thm_main}]
Let $M$ be a manifold satisfies the condition in Theorem~\ref{thm_main}. Suppose that there is no closed geodesic of length $\leq 3D$. Then Theorem~\ref{thmW} implies that every loop in $M$ may be contracted via a homotopy with width bounded by $B(V,D,N)$. This further implies by Theorem~\ref{thmD} that the depth $S_p(M,3D)\leq \max\{3D,2B+2D\}$.

As before, let $\OM_p(M)$ be the space of continuous maps $\{S^1\ra M\}$ based at $p\in M$ and $\OM^L_pM$ the subspace where every loop is of length $\leq L$. Now by taking the integer $k=2$ in Theorem~\ref{thm4_q}, we conclude that for every positive integer $m$, every map $f:S^{m}\ra \OM_pM$ is homotopic to a map $\ti{f}:S^m\ra \OM^F_pM$, where
$$F=F(m,V,D,N)=10\cdot m+D+(2m-1)\cdot S_p.$$
And in particular, the length of a closed periodic geodesic does not exceed $F(m,V,D,N)$.

Because our manifold is simply-connected, for $l\geq 2$, suppose it is $(l-1)-$connected but not $l-$connected , the above argument shows that there is a closed geodesic of length $\leq F(l,V,D)$. Note that $F$ is increasing in $l$. If $M$ is $(n-1)-$connected, then by Hurewicz theorem (see \cite[Theorem~4.32]{hatcher2002algebraic}), then $\pi_n(M)\cong H^n(M)\neq 0$, and hence we can take $A=F(n,V,D,N)$ in the above argument.
\end{proof}

\bigskip


\begin{thebibliography}{0}

\bibitem{cheeger2014regularity}
J. Cheeger and A. Naber, {\it Regularity of Einstein manifolds and the codimension 4 conjecture}, Ann. Math. (2) 182, No. 3, 1093--1165 (2015).

\bibitem{croke1988area}
C. Croke, {\it Area and the length of the shortest closed geodesic}, J. Differential Geom. 
27 , No. 1, 1--21 (1988). 

\bibitem{gromov1983filling}
M. Gromov, {\it Filling Riemannian manifolds}, J. Differential Geom. 18, 1--147 (1983).

\bibitem{hatcher2002algebraic}
A. Hatcher, {\it Algebraic topology}. Cambridge: Cambridge University Press (2002).

\bibitem{katz}
M. Katz, {\it Systolic geometry and topology. With an appendix by Jake P. Solomon}. Providence, RI: American Mathematical Society (AMS) (2007).

\bibitem{klingenberg2012lectures}
W. Klingenberg, {\it Lectures on closed geodesics}. Grundlehren der Mathematischen Wissenschaften, Vol. 230. Springer-Verlag, Berlin-New York, (1978).

\bibitem{milnor}
J. Milnor. {\it Morse Theory}. Princeton University Press, (1969).

\bibitem{nabutovsky2013length}
A. Nabutovsky and R. Rotman, {\it Length of geodesics and quantitative Morse theory on loop spaces}, Geom. Funct. Anal. 23, No. 1, 367--414 (2013).

\bibitem{pu1}
Pu, Pao Ming, {\it Some inequalities in certain nonorientable Riemannian manifolds}, Pacific J. Math. 2(1), 55--71 (1952).

\bibitem{rotman2000upper}
R. Rotman, {\it Upper bounds on the length of the shortest closed geodesic on simply connected manifolds}, Math. Z. 233, No. 2, 365--398 (2000).

\bibitem{rotman2006length}
R. Rotman, {\it The length of a shortest closed geodesic and the area of a 2-dimensional sphere}, Proc. Am. Math. Soc. 134, No. 10, 3041--3047 (2006) 

\bibitem{r_zzf}
N. Wu and Z. Zhu,  {\it Length of a shortest closed geodesic in manifolds of dimension four}, J. Differential Geom. 122, No. 3, 519--564 (2022).

\end{thebibliography}
\end{document}